\title{Distance formulas in Bruhat-Tits building of $\mathrm{SL}_d(\mathbb{Q}_p)$}
\author{Dominik Lachman\thanks{dominik.lachman01@upol.cz}\\
Department of Algebra and Geometry, Palack\'{y} University Olomouc, \\17. listopadu 12, 779 00 Olomouc, Czech Republic \\
Department of Algebra, Charles University, \\
So\-ko\-lov\-sk\' a~83, 186 00 Praha~8, Czech Republic}
\def\@testdef #1#2#3{%
  \def\reserved@a{#3}\expandafter \ifx \csname #1@#2\endcsname
 \reserved@a  \else
\typeout{^^Jlabel #2 changed:^^J%
\meaning\reserved@a^^J%
\expandafter\meaning\csname #1@#2\endcsname^^J}%
\@tempswatrue \fi}
\newcommand{\SL}{\mbox{\rm SL}}
\newcommand{\GL}{\mbox{\rm GL}}
\begin{document}
\theoremstyle{plain}
\newtheorem{thm}{Theorem}
\newtheorem{lemma}[thm]{Lemma}
\newtheorem{cor}[thm]{Corollary}
\newtheorem{prop}[thm]{Proposition}

\theoremstyle{plain}
\newtheorem{defn}[thm]{Definition}

\theoremstyle{remark}
\newtheorem{obs}[thm]{Observation}
\newtheorem{rem}[thm]{Remark}
\newtheorem{example}[thm]{Example}
\newtheorem{exmp}[thm]{Example}

\maketitle

\begin{abstract}We study the distance on the Bruhat-Tits building
of the group  $\mathrm{SL}_d(\mathbb{Q}_p)$  (and its other combinatorial properties).  Coding its vertices
by certain matrix representatives, we introduce a way how to build formulas
with combinatorial
meanings. 
In Theorem~\ref{distance}, we give an explicit formula for the graph distance $\delta(\alpha,\beta)$ of two vertices $\alpha$ and $\beta$  (without having to specify
their common apartment).Our main result, Theorem 2, then extends the distance formula
to a formula for the smallest total distance of a vertex from a given
finite set of vertices. In the appendix we consider the case of $\SL_2(\mathbb{Q}_p)$ and give a formula for the number of edges shared by two given apartments.
\end{abstract}

\maketitle

\section{Introduction}
We call Bruhat-Tits building any simplicial complexes $\mathscr{B}$ satisfying axioms:
\begin{enumerate}[(i)]
\item\label{ax1} $\mathscr{B}$ is a union of subcomplexes called \textit{apartments}, which are all copies of one affine coxeter complex (certain triangulation of Euclidean space). 
\item\label{ax2} Each pair of simplices have a common apartment.
\item\label{ax3} Whenever two apartments $\mathscr{A}_1$, $\mathscr{A}_2$ intersect in two simplices, there is a simplicial isomorphism sending $\mathscr{A}_1$ to $\mathscr{A}_2$, which fixes the two simplices point-wise.
\end{enumerate}
These buildings were designed to study algebraic reductive groups over local non-Archimedean fields. F. Bruhat and J. Tits in~\cite{BT72} and~\cite{BT84} associated to each such group $G$ a ``building" $\mathscr{B}(G)$ (yet their original concept was a bit different from the axioms ~(\ref{ax1}--\ref{ax3})) on which $G$ acts by simplicial automorphisms (so called strong transitivity condition holds) and hence they provided geometric representations to a vast class of groups. We can view $\mathscr{B}(G)$ as a geometric parametrisation of certain important subgroups of  $G$: there are one-to-one correspondences between the simplices of $\mathscr{B}(G)$ and the parahoric subgroups of $G$ and between the apartments and the maximal split tori of $G$.

Historically, the Bruhat-Tits theory is based on the work of Iwahori and Matsumoto~\cite{IM} who investigated Lie groups over local non-Archimedean fields and on the concept of spherical buildings already introduced by Tits in 1950s - the original motivation was to study exceptional groups over a general field.

The basic application of the theory is simplifying some proofs - buildings provide a way to visualize steps in proofs in geometric figures (e.g., Bruhat decomposition). The Bruhat-Tits theory has classical applications in the field of $p$-adic representation theory \cite{MoB}, $S$-aritmetic groups, non-Archimedean generalizations of super-rigidity theorem \cite{GS} and others. See~\cite{Li} for overview of applications of (general) buildings after half of a century of the existence of the concept. 

In the case of (classical) semisimple linear groups $G=SL_{n+1},SO_{2n+1},Sp_{2n}$, etc., (over some non-Archimedean local field) the buildings $\mathscr{B}(G)$'s are divided into families ($\tilde{A_n}$,$\tilde{B_n}$,$\tilde{C_n}$, etc.) according to the structure of their apartments. These families correspond to the root systems of $G$'s. The concrete choice of the base field affects only branching -- the way how the apartments are glued into the building. It is visible on the two basic cases: $\mathscr{B}(\mathrm{SL}_2(\mathbb{Q}_p))$ is a regular $(p+1)$-ary tree where the apartments are infinite lines and $\mathscr{B}(\mathrm{SL}_3(\mathbb{Q}_p))$ is a $2$-dimensional simplicial complex (which is actually hard to visualize), where each vertex has $2p^2+2p+2$ neighbours and the apartments are (natural) tilings of the plane with equilateral triangles. 

In the article we study some combinatorial properties of $\mathscr{B}(\mathrm{SL}_d(\mathbb{Q}_p))$ (or rather of its $1$-dimensional skeleton) where $d\geq 2$ and $\mathbb{Q}_p$ is the field of $p$-adic rationales. We deal only with the case of $p$-adic fields $\mathbb{Q}_p$ in order to slightly simplify our notations and proofs, although it is likely that the results generalize also to the case of arbitrary local non-Archimedean field.

We use the standard lattice construction of $\mathscr{B}(\mathrm{SL}_d(\mathbb{Q}_p))$ (described in Section~\ref{sec3}). We treat Bruhat-Tits buildings as abstract simplicial complexes which fits our aim which is to study combinatorial properties. 
In Section~\ref{sec4} we establish matrices representatives of vertices in a trivial way: a matrix $A\in\mathrm{GL}_d(\mathbb{Q}_p)$ represents a vertex $A_v$ given by the lattice $\mathrm{Row}_{\mathbb{Z}_p}(A)$. The group $\mathrm{SL}_d$ then acts simply by $\mathrm{SL}_d\ni g:A_v\mapsto (A\cdot g^{-1})_v$. 

In Section~\ref{sec5} we consider following invariants: given two matrices $A,B\in\mathrm{GL}_d(\mathbb{Q}_p)$ define an integer $m_i^{d-i}\equiv m_{A:i}^{B:d-i}$ as the minimum of the
valuations of the determinants of the collection of matrices obtained from $A$ and $B$,
by taking $i$ rows from $A$ and $d-i$ rows from $B$. We observe that the $m_i^{d-i}$'s are invariant under left multiplication $A\rightsquigarrow XA$ by any $X\in\mathrm{GL}_d(\mathbb{Z}_p)$ (the same holds for $B$) and any linear combination of $m_i^{d-i}$'s satisfying certain conditions on coefficients (Proposition~\ref{invarianceCon}) gives us well-defined formula on pairs of vertices.

Having the $m_i^{d-i}$'s in hand we prove (in Section~\ref{sec6}):
\begin{thm}\label{distance}
Graph distance (in $1$-skeleton $\mathscr{B}^1(\SL_d(\mathbb{Q}_p)$) of two vertices $\alpha:=A_v$, $\beta:=B_v$ equals
\begin{equation}\label{eqDistance}
\delta(\alpha,\beta)=m^{d}_{0}+m^{0}_{d}-m^{d-1}_{1}-m^{1}_{d-1}.
\end{equation}
\end{thm}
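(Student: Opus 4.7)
\emph{Reduction and evaluation of the right-hand side.} By left $\GL_d(\mathbb{Z}_p)$-invariance of each $m_i^{d-i}$ and the freedom to rescale representatives, Smith normal form lets me replace $A$ and $B$ by representatives with $B = \mathrm{diag}(p^{\lambda_1}, \ldots, p^{\lambda_d})\, A$, where $\lambda_1 \le \cdots \le \lambda_d$ are the elementary divisor exponents of $\beta$ relative to $\alpha$. In this form the four minors can be read off directly: since row $j$ of $B$ is $p^{\lambda_j}$ times row $j$ of $A$, the only row choices producing a nonzero determinant for $m_{d-1}^1$ (resp.\ $m_1^{d-1}$) are those in which the single $B$-row matches the omitted (resp.\ chosen) $A$-row. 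One computes $m_d^0 = v_p(\det A)$, $m_0^d = v_p(\det A) + \sum_i \lambda_i$, $m_{d-1}^1 = v_p(\det A) + \lambda_1$ and $m_1^{d-1} = v_p(\det A) + \sum_i \lambda_i - \lambda_d$, so the alternating sum in~\eqref{eqDistance} collapses to $\lambda_d - \lambda_1$. The theorem is thereby reduced to showing $\delta(\alpha, \beta) = \lambda_d - \lambda_1$.

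\emph{Upper bound.} Replacing $M$ by $p^{-\lambda_1} M$ (the same vertex), we may assume $\lambda_1 = 0$. Define $L_t := \bigoplus_{i=1}^d p^{\min(\lambda_i,\, t)}\, \mathbb{Z}_p\, e_i$ for $t = 0, 1, \ldots, \lambda_d$. The inequalities $\min(\lambda_i, t) \le \min(\lambda_i, t+1) \le \min(\lambda_i, t) + 1$ together with $\lambda_1 = 0$ yield strict inclusions $p L_t \subsetneq L_{t+1} \subsetneq L_t$, so the classes $[L_0], [L_1], \ldots, [L_{\lambda_d}]$ form an edge path of length $\lambda_d = \lambda_d - \lambda_1$ from $\alpha$ to $\beta$ in the $1$-skeleton.

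\emph{Lower bound --- the main obstacle.} The key step is showing that the formula $F(A,B) := m_0^d + m_d^0 - m_{d-1}^1 - m_1^{d-1}$ is $1$-Lipschitz along edges: $|F(A',B) - F(A,B)| \le 1$ whenever $\alpha' = A'_v$ is a neighbour of $\alpha = A_v$. Combined with the identity $F(B,B) = 0$ (all four minors equal $v_p(\det B)$), telescoping along any edge path from $\alpha$ to $\beta$ then gives $F(A,B) \le \delta(\alpha,\beta)$, completing the proof. To establish the Lipschitz bound, I would pick a basis of $\mathrm{Row}_{\mathbb{Z}_p}(A)$ adapted to the $\mathbb{F}_p$-subspace $\mathrm{Row}_{\mathbb{Z}_p}(A') / p\,\mathrm{Row}_{\mathbb{Z}_p}(A)$, which (after a left $\GL_d(\mathbb{Z}_p)$-change of representative) allows one to assume $A' = D A$ with $D = \mathrm{diag}(p^{\epsilon_1}, \ldots, p^{\epsilon_d})$, $\epsilon_i \in \{0,1\}$, and $1 \le k := \sum_i \epsilon_i \le d - 1$. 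A case analysis of how the four minors transform under this scaling --- $m_0^d$ is unchanged, $m_d^0$ increases by exactly $k$, $m_{d-1}^1$ increases by $k - 1$ or $k$, and $m_1^{d-1}$ increases by $0$ or $1$ (each alternative depending on whether the minimising row index for the corresponding minor lies inside the support of $D$) --- combines to give $F(A',B) - F(A,B) \in \{-1, 0, 1\}$, as required.
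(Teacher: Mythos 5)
Your proposal is correct and follows essentially the same route as the paper: reduce by left $\GL_d(\mathbb{Z}_p)$-multiplication to a normal form in which the four minors are computed explicitly and the right-hand side becomes the difference of the extreme exponents, exhibit an explicit lattice chain realizing a path of exactly that length, and prove the lower bound by showing the formula changes by at most $1$ across any edge (your case analysis of how $m_0^d$, $m_d^0$, $m_{d-1}^1$, $m_1^{d-1}$ shift under multiplying $k$ of the $d$ rows by $p$ is precisely the paper's Claim, stated for a neighbour of $\alpha$ instead of $\beta$). The only cosmetic difference is that you use the Smith normal form relation $B=\mathrm{diag}(p^{\lambda_1},\ldots,p^{\lambda_d})A$ where the paper uses its ``localized form'' with $\beta=I_v$, which amounts to the same reduction.
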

Further examples of formulas with combinatorial meaning are mentioned  below the proof of Theorem~\ref{distance}. 

Given a matrix $A\in\mathrm{GL}_d(\mathbb{Q}_p)$, some essential information about relative position of the vertex $A_v$ with respect to $I_v$ (e.g. distance) is readable from Smith normal form $D$ of $A$ (which is a diagonal matrix defined via relation $A=XDY$ for some $X,Y\in\mathrm{GL}(\mathbb{Z}_p)$, it always exists and is almost unique, see~\cite{Smith} ). There are the well known explicit formulas computing the diagonal entries of $D$, namely $D_{i,i}=\frac{d_i(A)}{d_{i-1}(A)}$ where $d_i$ is the greatest common divisor of all $i\times i$ minors of $A$. We can think of $m_i^{d-i}$'s as generalizations of these formulas (Remark~\ref{SmithRem}).

The Section~\ref{sec7} is a natural generalization of the foregoing section: we ask whether some $\mathbb{Z}$-linear combination of $m_i^{d-i}$'s computed for three or more matrices has a geometric meaning. The situation become much more challenging, since in the case of two vertices the axiom~\eqref{ax2} guarantees that we can choose (after change of base) diagonal matrix representatives of the vertices. Having diagonal representatives, to compute the $m_i^{d-i}$'s is rather easy. But the case when axiom~\eqref{ax2} holds for three vertices is uncommon. Our main result provides one such non-trivial formula, which gives (lower bound of) the smallest total distance of a vertex from a given finite set of vertices:
\begin{thm}
Given any collection of vertices $\alpha_1,\ldots,\alpha_n\in\mathscr{B}(\SL_d(\mathbb{Q}_p)$ we have
\begin{equation}\label{mainResult}
\min_{\beta\in\mathscr{B}^0} \bigg\lbrace\sum_{i=1}^n\delta(\alpha_i,\beta)\bigg\rbrace\geq \max_{\pi\in S_n} \bigg\lbrace m_{A_{i}:d}^{A_{\pi(i)}:0}-m_{A_{i}:d-1}^{A_{\pi(i)}:1}\bigg\rbrace.
\end{equation}
Equality in~\eqref{mainResult} holds whenever all $\alpha_i$'s lie on a common apartment. 
\end{thm}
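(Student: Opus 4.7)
The plan is to reduce everything to the simple invariant $\mu(M):=\min_{i,j}v_p(M_{ij})$, the minimum $p$-adic valuation of an entry of a matrix $M\in\GL_d(\mathbb{Q}_p)$. A Laplace expansion along row $j$ of $X$ shows that replacing that row by the $i$-th row of $Y$ produces a matrix of determinant $(YX^{-1})_{ij}\det X$; minimising over $i,j$ therefore gives $m_{X:d-1}^{Y:1}=v_p(\det X)+\mu(YX^{-1})$, while trivially $m_{X:d}^{Y:0}=v_p(\det X)$. Together with Theorem~\ref{distance}, this turns $\delta(X_v,Y_v)$ into $-\mu(XY^{-1})-\mu(YX^{-1})$ and collapses the right-hand side of~\eqref{mainResult} to $-\min_{\pi\in S_n}\sum_{i=1}^n\mu(A_{\pi(i)}A_i^{-1})$.

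The engine of the lower bound is the ultrametric supermultiplicativity $\mu(MN)\geq\mu(M)+\mu(N)$, which follows at once from $(MN)_{ij}=\sum_k M_{ik}N_{kj}$ and the non-Archimedean triangle inequality. I apply it with $M=A_{\pi(i)}B^{-1}$ and $N=BA_i^{-1}$, so that $MN=A_{\pi(i)}A_i^{-1}$; summing over $i$ and reindexing via $\sum_i\mu(A_{\pi(i)}B^{-1})=\sum_i\mu(A_iB^{-1})$ (which uses only that $\pi$ is a bijection) gives
\[
\sum_{i=1}^n\mu(A_{\pi(i)}A_i^{-1})\;\geq\;\sum_{i=1}^n\mu(A_iB^{-1})+\sum_{i=1}^n\mu(BA_i^{-1})\;=\;-\sum_{i=1}^n\delta(\alpha_i,\beta),
\]
for every vertex $\beta=B_v$ and every $\pi\in S_n$, which rearranges to~\eqref{mainResult}.

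For equality in the apartment case I simultaneously diagonalise so that $A_i=\mathrm{diag}(p^{a_{i,1}},\ldots,p^{a_{i,d}})$, and look for a minimiser of $\sum_i\delta(\alpha_i,\beta)$ among the vertices $B=\mathrm{diag}(p^{b_1},\ldots,p^{b_d})$ of the same apartment. The function $b\mapsto\sum_i\bigl[\max_j(a_{i,j}-b_j)-\min_j(a_{i,j}-b_j)\bigr]$ is convex and piecewise-linear on $\mathbb{R}^d$, invariant under the diagonal $\mathbb{R}(1,\ldots,1)$ and coercive on the quotient; pick a minimiser $b^\ast\in\mathbb{Z}^d$. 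Writing $p(i):=\mathrm{argmax}_j(a_{i,j}-b_j^\ast)$ and $q(i):=\mathrm{argmin}_j(a_{i,j}-b_j^\ast)$, the sub-gradient optimality condition at $b^\ast$ reads $\#\{i:p(i)=k\}=\#\{i:q(i)=k\}$ for every $k$, and matching the resulting equinumerous multisets $\{p(i)\}_i=\{q(i)\}_i$ yields a permutation $\pi\in S_n$ with $q(\pi(i))=p(i)$ for every $i$. For this $\pi$, the minima defining $\mu(A_{\pi(i)}B^{-1})$ and $\mu(BA_i^{-1})$ are jointly attained at $j=p(i)$, so the supermultiplicativity becomes a pointwise equality; summing gives $\sum_i\delta(\alpha_i,\beta^\ast)=-\sum_i\mu(A_{\pi(i)}A_i^{-1})$, which exhibits $\beta^\ast$ as a vertex attaining the right-hand side of~\eqref{mainResult}.

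The main obstacle I expect is this last step: one needs to verify the sub-gradient condition carefully under possible ties in the argmax/argmin (any tie-breaking consistent with the multiset balance works), and to note that producing a single apartment vertex $\beta^\ast$ attaining the right-hand side is enough --- combined with the general lower bound, $\beta^\ast$ is automatically a minimiser over all of $\mathscr{B}^0$. The inequality direction itself is a single-line manipulation once the invariants $m$ have been re-expressed through $\mu$.
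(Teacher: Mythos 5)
Your proof of the inequality half is correct and takes a genuinely different route from the paper. The identity $m_{X:d-1}^{Y:1}=\nu(\det X)+\mu(YX^{-1})$, with $\mu(M)=\min_{i,j}\nu(M_{i,j})$, is right (dropping row $j$ of $X$ and inserting row $i$ of $Y$ gives determinant $\pm(YX^{-1})_{i,j}\det X$); it converts Theorem~\ref{distance} into $\delta(X_v,Y_v)=-\mu(XY^{-1})-\mu(YX^{-1})$, and the ultrametric bound $\mu(MN)\geq\mu(M)+\mu(N)$ then yields the lower bound in one line. The paper instead proves a triangle-type inequality for the $m$'s (Lemma~\ref{ineq}) via localized forms and feeds it into Proposition~\ref{generalLowerBound}; your $\mu$-reformulation is equivalent to the case $i=1$ needed here and is arguably more transparent, though it does not recover the general exponent $i$ of Proposition~\ref{generalLowerBound}. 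Your closing observation that a single apartment vertex attaining the right-hand side suffices, by sandwiching against the general lower bound, is also correct.

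The equality half, however, has a genuine gap at its key step. The optimality condition you state, $\#\{i:p(i)=k\}=\#\{i:q(i)=k\}$ for every $k$, is only what subgradient optimality gives when each argmax and argmin is a singleton; at a minimizer of a piecewise-linear function ties are the generic situation (for instance, if some $\alpha_i$ coincides with $\beta^\ast$, every coordinate is simultaneously an argmax and an argmin for that $i$). With ties, optimality only provides the fractional balance: probability vectors $\lambda^i$ supported on the argmax sets $M_i$ and $\kappa^i$ on the argmin sets $m_i$ with $\sum_i\lambda^i=\sum_i\kappa^i$. Extracting from this a permutation $\pi$ and choices with $q(\pi(i))=p(i)$, i.e.\ $M_i\cap m_{\pi(i)}\neq\emptyset$ for all $i$, is a nontrivial matching-existence statement, and your remark that ``any tie-breaking consistent with the multiset balance works'' presupposes that a consistent tie-breaking exists --- which is exactly the point to be proved. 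This is precisely where the paper works: comparing $\beta$ with its neighbours $\beta^I$ gives $|\{r: I^{r}_{\max}\subseteq I\}|\leq|\{r: I^{r}_{\min}\cap I\neq\emptyset\}|$ for every coordinate set $I$ (inequality~\eqref{eqThisMayLastEq}), which is Hall's condition for the relation $M_i\cap m_{i'}\neq\emptyset$, and Hall's marriage theorem then produces $\pi$. Your argument can be repaired in the same spirit (route the fractional balance within each coordinate to get a doubly stochastic matrix supported on pairs with $M_i\cap m_{i'}\neq\emptyset$ and apply Birkhoff--von Neumann, or verify Hall's condition directly from the balance), but as written the decisive combinatorial step is missing. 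A secondary point: you pick the minimizer $b^\ast$ in $\mathbb{Z}^d$ and then use the subgradient condition, which tacitly assumes an integral minimizer is also a minimizer over $\mathbb{R}^d$; this is true here (the linearity regions are cut out by difference constraints with integral data), but it deserves a word, or can be avoided altogether by arguing with the discrete moves $b^\ast\mapsto b^\ast+\chi_I$ as the paper does.
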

In the proof, an essential trick is a use of the well known Hall's marriage theorem. The author strongly believe the equality holds in the general case as well, he consider it would be better to leave results in this direction (together with some other results) to a further article because of the length of the proofs. 

We also observe (in the case of one apartment) that every local minimum of the left hand side of~\eqref{mainResult} is already a global minimum. In the case $n=3$ the Theorem~\ref{mainResult} gives us a formula (lower bound) of the length of a minimal tree connecting three given vertices.

In the appendix we consider the case of $\SL_2(\mathbb{Q}_p)$ and give another formula of different kind, counting number of edges shared by two given apartments.

\section{Preliminaries}
Through the whole article we assume the knowledge of basic facts about the field of p-adic numbers $\mathbb{Q}_p$, which we will use without reference. We give a very quick overview
to p-adic numbers here.
We can think of the elements of $\mathbb{Q}_p$ as the infinite power series: Each $0\not=a\in\mathbb{Q}_p$ can be uniquely written in the form
\begin{equation}\label{eqPadic}
a=\sum_{i=n}^\infty a_i\cdot p^i,
\end{equation}
where $n\in\mathbb{Z}$, $a_i$'s are taken from $\{0,\ldots,p-1\}$ and $a_{n}\not =0$.
The field is equipped with an additive valuation $\nu_p:\mathbb{Q}_p\rightarrow \mathbb{Z}$ defined by  
\begin{equation}\label{eqDefOfVal}
\nu_p(a)=\begin{cases}
      \hfill \max\{k\in\mathbb{Z}:a\in(p)^k\}\hfill &\text{if\ \ }\hfill a\not= 0, \\
      \hfill \infty \hfill &\text{if\ \ }\hfill   a=0. \\
\end{cases}
\end{equation}
For every $a_1,\ldots,a_n\in\mathbb{Q}_p$ the valuation $\nu_p$ satisfies these two conditions:
\begin{align*}
\nu_p(a_1\cdot\cdots\cdot a_n)&=\nu_p(a_1)+\cdots+\nu_p(a_n),\\
\nu_p(a_1+\cdots+a_n)&\geq \min\{\nu_p(a_1),\ldots,\nu_p(a_n)\}.
\end{align*}
We call the second one \textit{triangular inequality}.
The ring of $p$-adic integers $\mathbb{Z}_p\subseteq \mathbb{Q}_p$ consists of the elements of $\mathbb{Q}_p$ with non-negative valuation.  
$\mathbb{Z}_p$ is a local ring with the maximal ideal $(p)$. The invertible elements in $\mathbb{Z}_p$ are exactly those with zero valuation.

Next define others basic algebraic concepts, used in the article:
By $\mathbb{Z}_p$-lattice we mean a $\mathbb{Z}_p$-submodule $\mathcal{L}$ of some $\mathbb{Q}_p$-vector space $\mathcal{V}$, such that $\mathcal{L}$ is finitely generated and generate whole $\mathcal{V}$. A key fact is that every $\mathbb{Z}_p$-lattice $\mathcal{L}$ has $\mathbb{Z}_p$-basis. This is consequence of the well-known Theorem of elementary divisors.

Let $R$ be a ring, then by $GL_d(R)$ we denote the group of all invertible $(d\times d)$-matrices over $R$. For example, $GL_d(\mathbb{Z}_p)$ are those matrices having determinant in $\mathbb{Z}_p^\times$. By $\SL_d(R)$ we denote all $(d\times d)$-matrices over ring $R$ having determinant equal to $1$. We will also use group of diagonal $(d\times d)$-matrices denoted by $\mathrm{Diag}_d(R)$. Given matrix $A$ we will refer to its element at the position $(i,j)$ by $A_{i,j}$.

We also assume the knowledge of the concept of abstract simplicial complexes. Given any abstract simplicial complex $\Delta$, by $\Delta^i$, $i\in\mathbb{N}^0$, we denote it's $i$-dimensional substructure. Hence $\Delta^1$ is essentially a graph and we will treat it as a graph. Finally, we will arrive at a slight distinction between a walk and a path in a graph, the second is assumed to avoid repetition in its vertices.
\section{Construction of the Bruhat-Tits building for group \texorpdfstring{$\SL_d(\mathbb{Q}_p)$}{SLd(Qp)}}\label{sec3}
In the section we describe a standard construction of the Bruhat-Tits building for group $\SL_d(\mathbb{Q}_p)$. We follow the Garrett's book~\citep{Gar}, section 19.1. We will not verify the axioms~(\ref{ax1}--\ref{ax3}), for this see the section 19.2 in~\citep{Gar}. More precisely stated axiom~\eqref{ax1} could be found in the book as well.

Through the whole article $p$ and $d$ will always refer to some fixed prime and some integer $d \geq 2$. Hence, we can freely leave reference to $p$ and $d$ from our notion. We will use $\nu$ instead of $\nu_p$ and by $\mathscr{B}$ we denote building for $\mathrm{SL}_d(\mathbb{Q}_p)$, which we are going to define.

By the well-known Theorem of elementary divisors all torsion-free modules over a discrete valuation ring are free (see~\cite{Lang}, Theorem 7.3.). Hence, each $\mathbb{Z}_p$-lattice $\mathcal{L}\subset \mathbb{Q}^d_p$ has some $\mathbb{Z}_p$-basis.

\begin{defn}
The building of the group $\SL_d(\mathbb{Q}_p)$, denoted by $\mathscr{B}\equiv\mathscr{B}(\SL_d(\mathbb{Q}_p))$, is an abstract simplicial complex constructed as follows: for the set of vertices we take
$$\{\mathcal{L}:\mathcal{L}\text{\ is\ }
\mathbb{Z}_p-\text{lattice}\text{\ in\ }\mathbb{Q}_p^d\}
/\sim,$$
where $\sim$ is an equivalence which identifies $\mathcal{L}\sim a\cdot \mathcal{L}$, for each $\mathbb{Z}_p$-lattice $\mathcal{L}$ and $a\in\mathbb{Q}_p^\times$. We call lattices equivalent in this fashion  homothetic. And for each $(k+1)$-tuple of $\mathbb{Z}_p$-lattices
\begin{equation}\label{eqChain}
\mathcal{L}_0\supsetneq \mathcal{L}_1\supsetneq \cdots\supsetneq \mathcal{L}_k\supsetneq p\mathcal{L}_0
\end{equation}
we add the $k$-simplex, whose vertices are that given by $\mathbb{Z}_p$-lattices $\mathcal{L}_i$'s. 
\end{defn}
Although for $d\geq 3$ it is hard to visualize the building, the local picture is quite simple to describe: the $k$-simplices containing $[\mathcal{L}_0]$ are in one-to-one correspondence with chains of $\mathbb{F}_p$-vector spaces $\mathbb{F}_p^d\gneq \mathcal{V}_1\gneq\cdots \gneq \mathcal{V}_k\gneq 0$. Just factorize~\eqref{eqChain} by $p\mathcal{L}_0$ and identify $\mathbb{Z}/p\mathbb{Z}_p\equiv \mathbb{F}_p$. Easily follows:
\begin{lemma}\label{lemDimOfB}
Every maximal simplex of $\mathscr{B}$ is of dimension $d-1$. Specially, the whole building has dimension $d-1$.
\end{lemma}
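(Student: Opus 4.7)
The plan is to translate chains of lattices into chains of $\mathbb{F}_p$-subspaces of $\mathbb{F}_p^d$, and then invoke the basic fact that a strict chain of subspaces of a $d$-dimensional space has length at most $d+1$.

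First, I would unpack the definition: a $k$-simplex of $\mathscr{B}$ corresponds (after choosing the ``leading'' representative $\mathcal{L}_0$) to a chain
\[
\mathcal{L}_0\supsetneq \mathcal{L}_1\supsetneq \cdots\supsetneq \mathcal{L}_k\supsetneq p\mathcal{L}_0.
\]
I would then reduce modulo $p\mathcal{L}_0$. Since $\mathcal{L}_0$ is a free $\mathbb{Z}_p$-module of rank $d$, the quotient $\mathcal{L}_0/p\mathcal{L}_0$ is isomorphic to $\mathbb{F}_p^d$, and each intermediate $\mathcal{L}_i$ is a $\mathbb{Z}_p$-submodule containing $p\mathcal{L}_0$, hence corresponds bijectively (by the third isomorphism theorem) to an $\mathbb{F}_p$-subspace $\overline{\mathcal{L}}_i\subseteq \mathbb{F}_p^d$. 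The strict inclusions are preserved, so the chain becomes
\[
\mathbb{F}_p^d=\overline{\mathcal{L}}_0\supsetneq \overline{\mathcal{L}}_1\supsetneq \cdots\supsetneq \overline{\mathcal{L}}_k\supsetneq 0.
\]

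Now a strictly descending chain of $\mathbb{F}_p$-subspaces of $\mathbb{F}_p^d$ has at most $d+1$ terms (each step drops the dimension by at least $1$). So $k+1\leq d$, i.e.\ $k\leq d-1$, which bounds the dimension of every simplex from above. For maximality, I would note that any chain of $\mathbb{F}_p$-subspaces can be refined to a complete flag of length $d+1$, and lifting back via the correspondence produces a refining chain of lattices. Thus every simplex is contained in a simplex of dimension exactly $d-1$, so the maximal simplices have dimension $d-1$, and the whole complex has dimension $d-1$.

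The proof is essentially a bookkeeping exercise, and I don't anticipate a serious obstacle; the only subtlety worth spelling out is the bijection between submodules of $\mathcal{L}_0$ containing $p\mathcal{L}_0$ and subspaces of $\mathcal{L}_0/p\mathcal{L}_0$, together with the remark that if $\mathcal{L}_i$ is a $\mathbb{Z}_p$-submodule of $\mathcal{L}_0$ strictly containing $p\mathcal{L}_0$ then it is automatically a $\mathbb{Z}_p$-lattice of full rank in $\mathbb{Q}_p^d$, so it actually represents a vertex of $\mathscr{B}$.
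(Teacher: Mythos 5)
Your argument is correct and is essentially the paper's own: the paper obtains the lemma precisely by reducing the chain \eqref{eqChain} modulo $p\mathcal{L}_0$ and identifying simplices at $[\mathcal{L}_0]$ with strict chains of $\mathbb{F}_p$-subspaces of $\mathbb{F}_p^d$, from which the bound $k\leq d-1$ and the existence of $(d-1)$-dimensional simplices (complete flags) follow. Your write-up merely spells out the submodule--subspace correspondence and the flag refinement that the paper leaves implicit.
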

\begin{rem}\label{defAction}
The action of $\SL_d(\mathbb{Q}_p)$ on $\mathscr{B}$ is that induced by the standard action on the vector space $\mathbb{Q}_p^d$. It is easy to see that $\SL_d(\mathbb{Q}_p)$ acts by simplicial automorphisms.
\end{rem}
\begin{defn}
For each basis $\mathcal{F}=\{u_1,\ldots,u_d\}$ of $\mathbb{Q}_p^d$ call an apartment the simplicial sub-simplex $\mathscr{A}$ generated by the vertices having following lattice representatives
$$\mathbb{Z}_p[p^{e_1}\cdot u_1,\ldots,p^{e_d}\cdot u_d],$$
where $e_i's$ are integers.  We call $\mathcal{F}$ a splitting basis for the apartment $\mathscr{A}$.
\end{defn}
Note that Theorem of elementary divisors assures a weak version of the axiom~\eqref{ax2} for pair of vertices.

\section{Matrix representatives}\label{sec4}

In computations it will be useful to have in hand some matrix data coding the vertices and the apartments. For this purpose we introduce \textit{matrix representatives}. Working with matrix representatives we always assume we have some basis $\mathcal{F}$ of $\mathbb{Q}_p^d$. The geometric meaning of all coordinates data is always related to the basis $\mathcal{F}$. Moreover, denote by $\mathcal{E}$ the canonical basis of $\mathbb{Q}_p^d$.

\begin{defn}\label{defRep}
Let $A\in GL_d(\mathbb{Q}_p)$. Having some basis $\mathcal{F}=\{u_1,\ldots,u_d\}$ of $\mathbb{Q}_p^d$, denote by $A_{v:\mathcal{F}}$ the vertex defined by the $\mathbb{Z}_{p}$-lattice spanned by 
\begin{equation}\label{eqSpan}
v_i=A_{i,1}\cdot u_1+A_{i,2}\cdot u_2+\cdots +A_{i,d}\cdot u_d,\ \ \ i=1,\ldots,d.
\end{equation}
And by $A_{a:\mathcal{F}}$ we denote the apartment, whose splitting basis is that from~\eqref{eqSpan}. That is, the vertices of $A_{a:\mathcal{F}}$ are exactly 
$$\{(DA)_{v:\mathcal{F}}: D\in \mathrm{Diag}_d(\mathbb{Q}_p)^\times\}.$$
\end{defn}

The following lemma concerns the ambiguity of just defined matrix representatives of vertices.
\begin{lemma}\label{lemUniquenesOfRep}
Let $A,B\in \GL_d(\mathbb{Q}_p)$ and $\mathcal{F}$ be a basis of $\mathbb{Q}^d$, then $A_{v:\mathcal{F}}=B_{v:\mathcal{F}}$ if and only if there are $X\in \GL_d(\mathbb{Z}_p)$ (i.e., $\det(X)\in\mathbb{Z}_p^\times$) and $i\in\mathbb{Z}$ such that 
$$B=p^i XA.$$
\end{lemma}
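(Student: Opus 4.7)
The plan is to translate the statement into one about equality/homothety of the row-lattices and then use that a $\mathbb{Z}_p$-basis of a lattice is unique up to right multiplication by an element of $\GL_d(\mathbb{Z}_p)$. Fix the basis $\mathcal{F}$ and let $\mathcal{L}_A, \mathcal{L}_B \subset \mathbb{Q}_p^d$ be the lattices spanned by the rows of $A$ and $B$ (read through $\mathcal{F}$ as in~\eqref{eqSpan}). By the definition of $\sim$, the condition $A_{v:\mathcal{F}} = B_{v:\mathcal{F}}$ means exactly that there exists $a \in \mathbb{Q}_p^\times$ with $\mathcal{L}_B = a \mathcal{L}_A$.

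For the easy direction, assume $B = p^i X A$ with $X \in \GL_d(\mathbb{Z}_p)$. Then the rows of $B$ are $\mathbb{Z}_p$-combinations of the rows of $p^i A$, so $\mathcal{L}_B \subseteq p^i \mathcal{L}_A$; applying the same argument to $A = p^{-i} X^{-1} B$ gives the reverse inclusion, hence $\mathcal{L}_B = p^i \mathcal{L}_A$, and the vertices coincide.

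For the converse, write $a = p^i u$ with $i \in \mathbb{Z}$ and $u \in \mathbb{Z}_p^\times$; since multiplication by the unit $u$ is a $\mathbb{Z}_p$-module automorphism of any lattice, $a \mathcal{L}_A = p^i \mathcal{L}_A$, so $\mathcal{L}_B = p^i \mathcal{L}_A$. Now the rows of $B$ and the rows of $p^i A$ are two $\mathbb{Z}_p$-bases of the same lattice $\mathcal{L}_B$, so there exists a unique matrix $X \in M_d(\mathbb{Q}_p)$ with $B = X (p^i A)$, and both $X$ and $X^{-1}$ must have entries in $\mathbb{Z}_p$ (because each basis expresses in the other with $\mathbb{Z}_p$-coefficients). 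Hence $X \in \GL_d(\mathbb{Z}_p)$ and $B = p^i X A$, as desired.

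The only mildly delicate point — and the one I would flag as the main obstacle — is that the scalar $a$ in the homothety is \emph{a priori} an arbitrary element of $\mathbb{Q}_p^\times$, not a power of $p$; the absorption of the unit part $u$ into the lattice (equivalently, into the matrix $X$) is what allows the normalization $a = p^i$. Everything else is the standard uniqueness of $\mathbb{Z}_p$-bases of a free $\mathbb{Z}_p$-module up to $\GL_d(\mathbb{Z}_p)$.
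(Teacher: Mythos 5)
Your proof is correct and follows essentially the same route as the paper's (very brief) argument: the power of $p$ normalizes the homothety scalar, and $X\in\GL_d(\mathbb{Z}_p)$ accounts for the ambiguity in choosing a $\mathbb{Z}_p$-basis of the row lattice. You have simply written out in full the details the paper leaves implicit, including the correct observation that the rows of $B$ and of $p^iA$, being $d$ linearly independent generators, are genuine $\mathbb{Z}_p$-bases of the same lattice.
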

\begin{proof}
The matrix $X$ corresponds to the ambiguity in choosing $\mathbb{Z}_p$-basis of $\mathrm{Row}_{\mathbb{Z}_p}(A)$ and power of $p$ treats the ambiguity in choosing a concrete representative of the homothety class.
\end{proof}


Working with coordinates, a common trick is choosing a suitable basis $\mathcal{F}$. For example, investigating an apartment $\mathscr{A}$ we choose basis $\mathcal{F}$ to be a splitting basis for $\mathscr{A}$, hence $\mathscr{A}=I_{a:\mathcal{F}}$. 
We have to describe how the skipping between bases affects the matrix representatives.
\begin{lemma}\label{lemChangeOfBasis}
Let $\mathcal{F}_1$ and $\mathcal{F}_2$ be two bases of $\mathbb{Q}^d_p$ and $T$ be the matrix of transition between them (i.e., rows of $T$ are coordinates of vectors of $\mathcal{F}_1$ with respect to $\mathcal{F}_2$). For any $A\in GL_d(\mathbb{Q}_p)$ we have
$$A_{v:\mathcal{F}_1}=(AT)_{v:\mathcal{F}_2}\text{\ \ and\ \ }A_{a:\mathcal{F}_1}=(AT)_{a:\mathcal{F}_2}.$$
\end{lemma}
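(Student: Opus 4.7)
The plan is to unfold the definition of $A_{v:\mathcal{F}_1}$ and show that re-expressing its generating vectors in the basis $\mathcal{F}_2$ produces a coordinate matrix which is literally the matrix product $AT$; the apartment equality then follows formally, since apartments are recorded as orbits $D\cdot (\,\cdot\,)$ under diagonal scaling.

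First, I would write $\mathcal{F}_1=\{u_1,\ldots,u_d\}$ and $\mathcal{F}_2=\{w_1,\ldots,w_d\}$, so that by the hypothesis on $T$ one has $u_i=\sum_k T_{i,k}w_k$. By Definition~\ref{defRep}, the vertex $A_{v:\mathcal{F}_1}$ is represented by the $\mathbb{Z}_p$-lattice spanned by the vectors $v_i=\sum_j A_{i,j}u_j$ for $i=1,\ldots,d$. Substituting $u_j=\sum_k T_{j,k}w_k$ and swapping the order of summation yields
\begin{equation*}
v_i=\sum_{k=1}^d\bigg(\sum_{j=1}^d A_{i,j}T_{j,k}\bigg)w_k=\sum_{k=1}^d(AT)_{i,k}\,w_k,
\end{equation*}
which is precisely the span defining $(AT)_{v:\mathcal{F}_2}$. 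This proves the first equality.

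For the apartment equality, I would use the description of the vertex set of $A_{a:\mathcal{F}_1}$ from Definition~\ref{defRep} as $\{(DA)_{v:\mathcal{F}_1}\colon D\in\mathrm{Diag}_d(\mathbb{Q}_p)^\times\}$. Applying the already-established first equality to each matrix $DA$ (with the transition matrix $T$ unchanged, since it depends only on $\mathcal{F}_1,\mathcal{F}_2$) gives $(DA)_{v:\mathcal{F}_1}=(DAT)_{v:\mathcal{F}_2}=(D\cdot AT)_{v:\mathcal{F}_2}$. Ranging $D$ over $\mathrm{Diag}_d(\mathbb{Q}_p)^\times$ produces exactly the vertex set of $(AT)_{a:\mathcal{F}_2}$, and since an apartment is determined as a simplicial subcomplex of $\mathscr{B}$ by its vertex set, the two apartments coincide.

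The only real subtlety is the row-versus-column bookkeeping: the definition uses rows of $A$ as coordinates of lattice generators, and the convention for $T$ places the coordinates of $\mathcal{F}_1$-vectors in the rows of $T$; a careful choice of summation indices as above makes the multiplication come out as $AT$ rather than $TA$. Apart from this bookkeeping the proof is a direct computation, with no real obstacle.
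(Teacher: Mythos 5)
Your computation is correct and is exactly the ``easy exercise from linear algebra'' the paper invokes without writing out: expanding $u_j=\sum_k T_{j,k}w_k$ shows the lattice generators of $A_{v:\mathcal{F}_1}$ have coordinate matrix $AT$ in $\mathcal{F}_2$, and applying this to each $DA$ handles the apartment case. No gap, and no divergence from the paper's (implicit) argument.
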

\begin{proof}
Proving this is just an easy exercise from linear algebra.
\end{proof}
In most of cases it will be clear with respect to which basis we use the matrix representatives. Hence, we will usually leave the reference to basis from the notion (e.g., vertices will be defined as $A_v$).   
\begin{rem}\label{fredomInBase}
Notice that matrix representatives of vertices are unique up to multiplication by certain matrices from left and skipping between two bases $\mathcal{F}_1$ and $\mathcal{F}_2$ affects the matrix representatives by multiplication by the transition matrix from right. Hence, from associativity of matrix multiplication follows:
$$A_{v:\mathcal{F}_1} =B_{v:\mathcal{F}_1}\Leftrightarrow A_{v:\mathcal{F}_2} =B_{v:\mathcal{F}_2}.$$
\end{rem}
\begin{defn}\label{defRelsimVA}
Define a relation of equivalence $\sim_v$ on the set of matrices in $\GL_{d}(\mathbb{Q}_p)$ by
$ A\sim_v B$ iff $A_{v:\mathcal{F}}=B_{v:\mathcal{F}}$ for some (equivalently for all) basis $\mathcal{F}$.
\end{defn}
The essential knowledge we gain from the following lemma, is that, never mind which basis we choose, the $\GL_d(\mathbb{Q}_p)$ acts on the matrix representatives by matrix multiplication from right. 
\begin{lemma}\label{lemActInRep}
Let $\rho$ be the action from Definition~\ref{defAction}. Given $g\in \GL_d(\mathbb{Q}_p)$, with respect to the canonic basis $\mathcal{E}$ the action of $g$ on the vertices of $\mathscr{B}$ is this:
$$\rho_g:A_{v:\mathcal{E}}\mapsto (Ag^{-1})_{v:\mathcal{E}}.$$
If basis $\mathcal{F}$ arises from $\mathcal{E}$ by transition matrix $T$ (i.e., $\mathcal{F}$ is given by rows of $T$) then $g$ acts on the vertices of $\mathscr{B}$ as follows:
$$\rho_g:A_{v:\mathcal{F}}\mapsto (AT^{-1}g^{-1}T)_{v:\mathcal{F}}.$$
\end{lemma}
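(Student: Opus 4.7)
The plan is a two-step argument: directly unwind the definitions to prove the canonical-basis formula, then deduce the general formula by sandwiching it between two applications of Lemma~\ref{lemChangeOfBasis}.

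For the first formula, I would start from Definition~\ref{defRep} with $\mathcal{F}=\mathcal{E}$: the vertex $A_{v:\mathcal{E}}$ corresponds to the lattice $\mathrm{Row}_{\mathbb{Z}_p}(A)\subset\mathbb{Q}_p^d$, each row of $A$ being the coordinate row (with respect to $\mathcal{E}$) of one of the $d$ prescribed $\mathbb{Z}_p$-generators. The action $\rho_g$ is induced from the linear $\GL_d(\mathbb{Q}_p)$-action on $\mathbb{Q}_p^d$; in the row-vector convention fixed by Definition~\ref{defRep}, this is realized by right multiplication $v\mapsto vg^{-1}$, the exponent being exactly what is needed for $\rho$ to be a left action. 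Applying this row by row, the image lattice is $\mathrm{Row}_{\mathbb{Z}_p}(Ag^{-1})$, which by Definition~\ref{defRep} represents $(Ag^{-1})_{v:\mathcal{E}}$. A one-line check $\rho_{g_1}\rho_{g_2}(A_v)=(Ag_2^{-1}g_1^{-1})_v=\rho_{g_1g_2}(A_v)$ using matrix associativity confirms functoriality, and Lemma~\ref{lemUniquenesOfRep} ensures that the choice of matrix representative within the class does not affect the output.

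For the second formula, I would apply Lemma~\ref{lemChangeOfBasis} to pass from basis $\mathcal{F}$ to $\mathcal{E}$, obtaining the identity $A_{v:\mathcal{F}}=(AT^{-1})_{v:\mathcal{E}}$; then apply the canonical-basis formula just proved to get $\rho_g\bigl((AT^{-1})_{v:\mathcal{E}}\bigr)=(AT^{-1}g^{-1})_{v:\mathcal{E}}$; and finally invoke Lemma~\ref{lemChangeOfBasis} once more in the reverse direction to rewrite this representative back in basis $\mathcal{F}$ as $(AT^{-1}g^{-1}T)_{v:\mathcal{F}}$. The net effect on the representing matrix is conjugation of $g^{-1}$ by the transition matrix $T$, exactly as one expects on change-of-basis grounds.

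The main obstacle is purely bookkeeping: matching the row convention of Definition~\ref{defRep} with the direction chosen for $T$ in Lemma~\ref{lemChangeOfBasis}, and tracking which side $g^{-1}$ and $T$ appear on in each step. Once those conventions are aligned, the whole argument reduces to a single conceptual step (the canonical-basis case) flanked by two mechanical applications of Lemma~\ref{lemChangeOfBasis}; no additional input about the building structure is needed.
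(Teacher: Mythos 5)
Your overall plan---prove the canonical-basis formula by unwinding Definition~\ref{defRep}, then conjugate by the transition matrix via two applications of Lemma~\ref{lemChangeOfBasis}---is exactly the argument the paper leaves as an ``easy exercise'', and your first paragraph fills it in correctly: with rows as coordinate vectors, the action $v\mapsto vg^{-1}$ (which is the convention the paper fixes, cf.\ the introduction and Remark~\ref{defAction}) sends $\mathrm{Row}_{\mathbb{Z}_p}(A)$ to $\mathrm{Row}_{\mathbb{Z}_p}(Ag^{-1})$, and well-definedness on vertices follows from Lemma~\ref{lemUniquenesOfRep} since $B=p^iXA$ implies $Bg^{-1}=p^iX(Ag^{-1})$.

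The second step, however, trips over precisely the bookkeeping you name as the main obstacle. Lemma~\ref{lemChangeOfBasis} is stated with the rows of the transition matrix being the coordinates of the vectors of $\mathcal{F}_1$ with respect to $\mathcal{F}_2$, and the present statement says ``$\mathcal{F}$ is given by rows of $T$'', i.e.\ the rows of $T$ are the $\mathcal{E}$-coordinates of the vectors of $\mathcal{F}$. Under that convention the first conversion must read $A_{v:\mathcal{F}}=(AT)_{v:\mathcal{E}}$, not $(AT^{-1})_{v:\mathcal{E}}$, and the computation then yields $\rho_g\colon A_{v:\mathcal{F}}\mapsto (ATg^{-1}T^{-1})_{v:\mathcal{F}}$. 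Your identity $A_{v:\mathcal{F}}=(AT^{-1})_{v:\mathcal{E}}$ tacitly uses the opposite convention (rows of $T$ giving the $\mathcal{F}$-coordinates of the vectors of $\mathcal{E}$), which is what is needed to land on the displayed formula $(AT^{-1}g^{-1}T)_{v:\mathcal{F}}$. So either you adopt that opposite reading of $T$ explicitly---in which case it is the parenthetical in the statement, not your computation, that must be read the other way around---or you keep the stated convention and the conclusion comes out with $T$ and $T^{-1}$ interchanged. As written, the proposal switches conventions midstream without acknowledging it; the fix is to choose one direction for $T$, state it, and carry it consistently through both applications of Lemma~\ref{lemChangeOfBasis}, obtaining $ATg^{-1}T^{-1}$ or $AT^{-1}g^{-1}T$ accordingly, and to note the resulting discrepancy with the statement as printed.
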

\begin{proof}
An easy exercise from linear algebra.
\end{proof}
\begin{lemma}\label{lemHelpInIntro}
Vertices $\alpha_0,\ldots,\alpha_k$ form an
 $k$-simplex if and only if there are matrices $A_0,\ldots,A_k\in\GL_d(\mathbb{Q}_p)$, such that $(A_i)_v$'s are all the vertices $\alpha_i$'s and
\begin{equation}\label{eqSimInCoor}
\mathrm{Row}_{\mathbb{Z}_p}(A_0)\supsetneq\mathrm{Row}_{\mathbb{Z}_p}(A_1)\supsetneq\cdots\supsetneq\mathrm{Row}_{\mathbb{Z}_p}(A_{k})\supsetneq p\mathrm{Row}_{\mathbb{Z}_p}(A_0).
\end{equation}
\end{lemma}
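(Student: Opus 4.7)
The plan is a direct unpacking of the definition of a simplex in $\mathscr{B}$ together with the description in Definition~\ref{defRep} of how a matrix represents a vertex. The key observation to keep in mind throughout is that, fixing any basis $\mathcal{F}$, the vertex $A_{v:\mathcal{F}}$ is by construction the homothety class of the lattice $\mathrm{Row}_{\mathbb{Z}_p}(A)$ read through $\mathcal{F}$, so picking a matrix representative of a vertex amounts to picking a $\mathbb{Z}_p$-basis of some concrete lattice in the corresponding homothety class.

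For the forward direction, assume the $\alpha_i$'s span a $k$-simplex. By the definition of $\mathscr{B}$, this means there exist lattice representatives $\mathcal{L}_0,\ldots,\mathcal{L}_k$ of $\alpha_0,\ldots,\alpha_k$ fitting into a strict chain $\mathcal{L}_0\supsetneq\cdots\supsetneq\mathcal{L}_k\supsetneq p\mathcal{L}_0$. Invoking the Theorem of elementary divisors, each $\mathcal{L}_i$ admits a $\mathbb{Z}_p$-basis; defining $A_i$ to be the matrix whose rows are the coordinates of such a basis with respect to $\mathcal{F}$ gives $(A_i)_v=\alpha_i$ together with $\mathrm{Row}_{\mathbb{Z}_p}(A_i)=\mathcal{L}_i$, so the chain~\eqref{eqSimInCoor} is exactly the chain we started with. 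Conversely, if matrices $A_0,\ldots,A_k$ as in the statement exist, set $\mathcal{L}_i:=\mathrm{Row}_{\mathbb{Z}_p}(A_i)$. Invertibility of $A_i$ ensures that each $\mathcal{L}_i$ is a full $\mathbb{Z}_p$-lattice in $\mathbb{Q}_p^d$, and by Definition~\ref{defRep} it lies in the homothety class defining $\alpha_i$. The chain~\eqref{eqSimInCoor} is then precisely the one appearing in the definition of a $k$-simplex in $\mathscr{B}$, so the $\alpha_i$'s form a simplex.

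I do not foresee a genuine obstacle: the lemma is essentially a translation between the lattice language used to define $\mathscr{B}$ and the matrix language set up in Section~\ref{sec4}. The only point that requires a little care is the implicit role of homothety --- a matrix representing $\alpha_i$ pins down one concrete lattice in the homothety class, so one must check that representatives can be chosen to realize the strict chain. This is harmless because the definition of a simplex already guarantees the existence of such representatives, and conversely any matrices with the chain property supply lattice representatives by reading off their rows.
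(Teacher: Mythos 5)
Your proof is correct and follows essentially the same route as the paper: the lemma is a direct translation between the lattice chain in the definition of a simplex and the row-lattices of matrix representatives, with matrices obtained by choosing $\mathbb{Z}_p$-bases of the lattices (via the Theorem of elementary divisors). Your write-up just spells out both directions and the homothety point a bit more explicitly than the paper does.
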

\begin{proof}
There has to be a chain of lattices like in~\eqref{eqChain} witnessing the $k$-simplex. We take for the matrices $A_i$'s the matrices whose rows generates the lattices in the sense of Definition~\ref{defRep}. 
\end{proof}

\section{Localized form and distance-formulas}\label{sec5}
In the section we introduce a concept of distance formulas, which is key to the article.
In most of the definitions and theorems we will work with matrix representatives of vertices and apartments, so any matrix occurring in this and further sections, which is not specified, is intended to be an element of $\mathrm{GL}_d(\mathbb{Q}_p)$. We will work in coordinates, hence all coordinate-like data will be expressed with respect to some basis $\mathcal{F}$. If there will be no danger of confusion, we will leave the reference to $\mathcal{F}$ from the notion of matrix representatives. Since the choice of $\mathcal{F}$ is arbitrary we will often chose the one suitable to the situation.

For each square matrix $X$ over $\mathbb{Q}_p$ define $$|X|_p=\nu(\det(X)).$$
\begin{defn}
Let $A$ and $B$ be two matrices from $\GL_d(\mathbb{Q}_p)$ and $i$ an integer, $0\leq i\leq d$. Call $\text{\footnotesize{(A:i,B:d-i)}}$-\textit{candidate} any matrix $X$ composed from $i$ rows of $A$ and $d-i$ rows of $B$. Define
$$
m_{A:i,B:d-i}:=\min\{|X|_p: X\text{ is \footnotesize{(A:i,B:d-i)}}\text{-candidate}\}.
$$
Call $\text{\footnotesize{(A:i,B:d-i)}}$-\textit{witness} any $\text{\footnotesize{(A:i,B:d-i)}}$-candidate $X$ having $|X|_p=m_{A:i,B:d-i}$.
\end{defn}   
In the following lemma we will need this fact: each matrix from $\GL_d(\mathbb{Z}_p)$ can be decomposed into a product of elementary matrices with integral elements. It is proved for any local ring in~\cite{ARD}, Corollary 13.1.4.
\begin{lemma}\label{lemIn1:1}
Let $A, B\in \GL_{d}(\mathbb{Q}_p)$ and $k\in\mathbb{Z}$, $0\leq k\leq d$. The integer $m_{A:k,B:d-k}$ is invariant under left multiplication of $A$ or $B$ by matrices in $\GL_d(\mathbb{Z}_p)$. That is
$$m_{A:k,(XB):d-k}=m_{A:k,B:d-k}=m_{(XA):k,B:d-k},\text{\ \ for\ each\ }X\in \GL_d(\mathbb{Z}_p).$$
\end{lemma}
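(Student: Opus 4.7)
The plan is to use the decomposition of an arbitrary $X\in\GL_d(\mathbb{Z}_p)$ into a product of integral elementary matrices (the fact cited immediately before the lemma), and verify the invariance one elementary matrix at a time. By the obvious symmetry between $A$ and $B$ in the definition of $m_{A:k,B:d-k}$, it is enough to treat left multiplication of $A$, i.e.\ to show $m_{(XA):k,B:d-k}=m_{A:k,B:d-k}$. The three types of elementary operations we must handle are: row permutations, scaling a row by a unit $u\in\mathbb{Z}_p^\times$, and adding $c R_j$ to $R_i$ with $c\in\mathbb{Z}_p$.

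Row permutations merely relabel the rows of $A$, so the set of $(A:k,B:d-k)$-candidates is unchanged. Scaling row $i$ of $A$ by $u\in\mathbb{Z}_p^\times$ multiplies the determinant of every candidate containing that row by $u$, and since $\nu(u)=0$ this leaves all valuations (hence $m_{A:k,B:d-k}$) invariant. The genuine content of the lemma is the elementary operation $R_i\mapsto R_i+cR_j$; I expect this to be the only step requiring care, precisely because it is where the integrality hypothesis $c\in\mathbb{Z}_p$ is actually used.

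Let $A'$ denote the matrix obtained from $A$ by this row operation, and let $X$ be an arbitrary $(A':k,B:d-k)$-candidate. I would split into three subcases. If the new row $R_i+cR_j$ is not chosen in $X$, then $X$ is itself an $(A:k,B:d-k)$-candidate with the same determinant. If both the new row and the row $R_j$ of $A'$ are chosen, then subtracting $c$ times the latter from the former inside $X$ leaves the determinant invariant and exhibits $X$ again as an $(A:k,B:d-k)$-candidate. In the remaining case, where $R_i+cR_j$ is taken but $R_j$ is not, multilinearity of the determinant along this row gives
\[
\det(X)=\det(X_0)+c\,\det(X_1),
\]
where $X_0$ replaces the new row by $R_i$ and $X_1$ replaces it by $R_j$; both $X_0$ and $X_1$ are $(A:k,B:d-k)$-candidates (the second one uses a different set of $k$ rows of $A$). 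The triangle inequality together with $\nu(c)\geq 0$ then forces $|X|_p\geq \min\{|X_0|_p,|X_1|_p\}\geq m_{A:k,B:d-k}$.

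Taking the minimum over candidates $X$ yields $m_{(XA):k,B:d-k}\geq m_{A:k,B:d-k}$. For the reverse inequality, one applies the same argument to $X^{-1}\in\GL_d(\mathbb{Z}_p)$ acting on $XA$ (equivalently, the inverse elementary operation is again integral, e.g.\ $R_i\mapsto R_i-cR_j$). Composing through all the elementary factors of a general $X\in\GL_d(\mathbb{Z}_p)$ gives the full statement.
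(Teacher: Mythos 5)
Your proof is correct and takes essentially the same route as the paper: decompose $X$ into integral elementary matrices, dismiss permutations and unit scalings directly from the definition, and for the operation $R_i\mapsto R_i+cR_j$ use multilinearity of the determinant together with the triangle inequality and $\nu(c)\geq 0$, obtaining one inequality and reversing it via the inverse (again integral) operation. Your explicit treatment of the subcase where both the modified row and $R_j$ occur in the candidate is a minor refinement the paper leaves implicit (there the corresponding determinant simply vanishes), but otherwise the argument is the same.
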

\begin{proof}
We prove that $m_{A:k,B:d-k}$ is invariant under multiplying $A$ by the elementary matrices (i.e., those which correspond to elementary row transformations). Directly from definition, permuting rows and multiplying any row by an element of $\mathbb{Z}_p^\times$  does not change 
$m_{A:k,B:d-k}$. Let $A'$ be the matrix with the same rows as $A$ except $A'_{i,*} = A_{i,*}+r\cdot A_{j,*}$, where $r\in \mathbb{Z}_p$ and $i\neq j$. 
Choose any {\textit{\footnotesize{($A'$:k,B:d-k)}}}-witness $X$. It suffices to prove $m_{A':k,B:d-k}\geq m_{A:k,B:d-k}$ (since the elementary row transformations are invertible). 

If $X$ does not contain row $A'_{i,*}$, it is also {\textit{\footnotesize{(A:i,B:d-i)}}}-candidate, and hence the inequality is trivial.
Assume that $X$ contains $A'_{i,*}$. According to the well known rules for computing determinant we have
$$\det(X)=\det(X_i)+r\cdot \det(X_j),$$
where we get $X_i$ ($X_j$, resp.) by replacing the row $A'_{i,*}$ in $X$ by $A_{i,*}$ ($A_{j,*}$, resp.). Both $X_i$ and $X_j$ are {\textit{\footnotesize{(A:k,B:d-k)}}}-candidates, so 
$$m_{A':k,B:d-k} \geq \min\{|X_i|_p, |X_j|_p+
\nu(r)\}
\geq m_{A:k,B:d-k}.$$
First inequality follows from the triangular inequality and the second one from definition of $m_{A:k,B:d-k}$ (note that $\nu(r)\geq 0$, since $r\in\mathbb{Z}_p$).
\end{proof}

Now, using $m_{A:i,B:d-i}$'s, we can consider expressions in the form
\begin{equation}\label{generalForm}
\sum_{i=0}^{d}a_{i}\cdot m_{A:i,B:d-i},
\end{equation}
where $a_i$'s are  elements of $\mathbb{Z}$, and investigate when they have some geometric meanings. 

\begin{prop}\label{invarianceCon}
Let $a_0,\ldots,a_d\in \mathbb{Z}$, for each $A,B\in \GL_{d}(\mathbb{Q}_p)$ define 

\begin{equation}\label{invExp}
\gamma(A,B)=\sum_{i=0}^{d}a_{i}\cdot m_{A:i,B:d-i}.
\end{equation}
Fix concrete pair $A,B$. The following two conditions are equivalent:
\begin{enumerate}[(i)]
\item there is $\gamma(A,B)=\gamma(A',B')$ for any $A'\sim_v A$ and $B'\sim_v B$,
\item the following equations hold:
\begin{equation}\label{invariantCon1}
\sum_{i} i\cdot a_i=0,
\end{equation}
\begin{equation}\label{invariantCon2}
\sum_{i} (d-i)\cdot a_i=0.
\end{equation}
\end{enumerate}
Furthermore, these three conditions are equivalent:
\begin{enumerate}[(i)]
\setcounter{enumi}{2}
\item\label{con4}$\gamma(A,B)=\gamma(AT,BT)$ for each $T\in \GL_d(\mathbb{Q}_p)$,
\item\label{con5} $\gamma(A,B)=\gamma(AT,BT)$ for some $T\in \GL_d(\mathbb{Q}_p)$, with $|T|_p\not=0$,
\item\label{con6} following equation holds
\begin{equation}\label{invariantCon3}
\sum_{i} a_i=0.
\end{equation}
\end{enumerate}
If any two of the equations~\eqref{invariantCon1},~\eqref{invariantCon2} and~\eqref{invariantCon3} hold, all of them hold and~\eqref{invExp} well-defines function
$\gamma:\mathscr{B}^0\times \mathscr{B}^0\rightarrow\mathbb{Z}$.
\end{prop}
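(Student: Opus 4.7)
The plan is to reduce everything to two basic computations of how $m_{A:i,B:d-i}$ transforms under (a) scaling $A$ or $B$ on the left by $p^j$, and (b) right-multiplying both $A$ and $B$ by the same $T\in\GL_d(\mathbb{Q}_p)$. First I would observe that multiplying $A$ on the left by $p^j$ scales exactly $i$ rows of every $(A{:}i,B{:}d{-}i)$-candidate by $p^j$, so by multiplicativity of the determinant
$$m_{(p^j A):i,\, B:d-i} = m_{A:i,\, B:d-i} + i\cdot j,\qquad m_{A:i,\,(p^j B):d-i} = m_{A:i,\, B:d-i} + (d-i)\cdot j.$$
Similarly, each $(AT{:}i,\,BT{:}d{-}i)$-candidate is just $XT$ for the corresponding $(A{:}i,B{:}d{-}i)$-candidate $X$, so $|XT|_p = |X|_p + |T|_p$ and therefore $m_{AT:i,\, BT:d-i} = m_{A:i,\, B:d-i} + |T|_p$.

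For (i)$\Leftrightarrow$(ii), I combine Lemma~\ref{lemIn1:1} (invariance under left $\GL_d(\mathbb{Z}_p)$-multiplication) with Lemma~\ref{lemUniquenesOfRep}, which says the only remaining effect of replacing $A$ by $A'\sim_v A$ is a factor $p^j$. Summing the formulas above,
$$\gamma(p^j A,B)-\gamma(A,B) = j\sum_i i\cdot a_i,\qquad \gamma(A,p^j B)-\gamma(A,B) = j\sum_i (d-i)\cdot a_i.$$
Specializing to $j=1$ shows (i)$\Rightarrow$(ii); conversely, (ii) makes both differences vanish for every $j\in\mathbb{Z}$, so together with Lemma~\ref{lemIn1:1} and Lemma~\ref{lemUniquenesOfRep} it yields (i).

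For (iii)$\Leftrightarrow$(iv)$\Leftrightarrow$(v) I use the identity
$$\gamma(AT,BT)-\gamma(A,B) = |T|_p\cdot\sum_i a_i.$$
This is immediately linear in $|T|_p$, giving (v)$\Rightarrow$(iii)$\Rightarrow$(iv) for free; and for (iv)$\Rightarrow$(v) I take the specified $T$ with $|T|_p\neq 0$ and divide. The three linear equations are not independent: the identity $\sum_i i\cdot a_i + \sum_i (d-i)\cdot a_i = d\sum_i a_i$ (with $d\neq 0$) shows that any two of \eqref{invariantCon1}, \eqref{invariantCon2}, \eqref{invariantCon3} force the third.

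For the well-definedness on $\mathscr{B}^0\times \mathscr{B}^0$, a vertex corresponds to a matrix representative up to (a) the $\sim_v$-ambiguity, controlled by (i)--(ii), and (b) the choice of basis, which by Lemma~\ref{lemChangeOfBasis} amounts to right-multiplication of both $A$ and $B$ by a common transition matrix, controlled by (iii)--(v); when all three equations hold, $\gamma$ descends to an integer-valued function on $\mathscr{B}^0\times\mathscr{B}^0$. The only mild obstacle is the bookkeeping of these two independent degrees of freedom: one needs to verify that (ii) guarantees invariance simultaneously in the $A$- and $B$-slots, and that once (ii) and (v) both hold, the invariance under the full group generated by left-$\GL_d(\mathbb{Z}_p)$-action, left-scaling, and right-transition on both slots is automatic, which follows directly because all three invariances commute at the level of the formulas above.
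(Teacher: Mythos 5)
Your proposal is correct and follows essentially the same route as the paper's proof: the two transformation formulas $m_{(p^jA):i,B:d-i}=m_{A:i,B:d-i}+ij$ (and its analogue in the $B$-slot) and $m_{AT:i,BT:d-i}=m_{A:i,B:d-i}+|T|_p$, combined with Lemma~\ref{lemIn1:1}, Lemma~\ref{lemUniquenesOfRep} and Lemma~\ref{lemChangeOfBasis}, are exactly the ingredients the paper uses, and your handling of the redundancy of the three linear conditions via $\sum_i i\,a_i+\sum_i(d-i)a_i=d\sum_i a_i$ is just a more explicit version of the paper's remark. No gaps.
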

\begin{proof}
According to 
Definition~\ref{defRelsimVA}
we can go through all $A'$ with $A'\sim_v A$ by multiplying $A$ from left by $r\cdot Q$, where $Q\in \GL_d(\mathbb{Z}_p)$ and $r\in \mathbb{Q}_p^\times$. Lemma~\ref{lemIn1:1} states that multiplying by $Q$ causes no change: $\gamma(A,B)=\gamma(Q\cdot A,B)$. 
Next, every {\textit{\footnotesize{(rA:i,B:d-i)}}}-candidate
$Y$ is just {\textit{\footnotesize{(A:i,B:d-i)}}}-candidate $X$ with $i$ rows multiplied by $r$, so $|Y|_p=|X|_p+i\cdot\nu(r)$.
 It easily follows 
 $$m_{(r\cdot A):i,B:d-i} = m_{A:i,B:d-i}+i\cdot\nu(r),$$ hence
$$\gamma(r\cdot Q\cdot A,B)=\gamma(A,B)+\nu(r)\cdot\sum_{i}i\cdot a_i.$$ Obviously, the requirement $\gamma(A,B)=\gamma(A',B)$ for each $A'\sim_v A$ is equivalent to equation~(\ref{invariantCon1}). Analogous investigation for $B$ leads to equation~(\ref{invariantCon2}). 
Since the equation~(\ref{invariantCon2}) does not depend on $A$,  
if equations~\eqref{invariantCon1} and~\eqref{invariantCon2} hold, then $\gamma(A,B)=\gamma(A',B)=\gamma(A',B')$ for any $A'\sim_v A$ and $B'\sim_v B$.

The {\textit{\footnotesize{(AT:i,BT:d-i)}}}-candidates are exactly the $XT$'s where $X$'s are {\textit{\footnotesize{(A:i,B:d-i)}}}-candidates. 
Hence
$$m_{AT:i,BT:d-i}= \min\{|XT|_p:X\text{\textit{ is \footnotesize{(A:i,B:d-i)}}\text{-candidate}}\}=$$
$$=\min\{|X|_p:X\text{\textit{ is \footnotesize{(A:i,B:d-i)}}\text{-candidate}}\}+|T|_p=m_{A:i,B:d-i}+|T|_p.$$
It follows $$\gamma(AT,BT)=\gamma(A,B)+|T|_p\sum_{i}a_i.$$ 
Using this formula, all equivalences $\eqref{con4}\Leftrightarrow\eqref{con5} \Leftrightarrow \eqref{con6}$ are quite clear.

Finaly the last part: The irrelevance of one of the conditions is easy to see. According to Lemma~\ref{lemUniquenesOfRep} the conditions~\eqref{invariantCon1} and~\eqref{invariantCon2} assure the $\gamma$ is well defined on vertices. The Lemma~\ref{lemChangeOfBasis} states that change of basis $\mathcal{F}$ effects the matrix representatives by right multiplication by certain matrix $T$, this is treated by condition~\eqref{invariantCon3}.
\end{proof}
We will call the equations~\eqref{invariantCon1}, \eqref{invariantCon2} and~\eqref{invariantCon3} from the previous lemma \textit{invariance conditions}. And we call any formula~\eqref{generalForm} satisfying invariance conditions \textit{distance formula}. Let us extend the notion:
$$m_{A:i,B:d-i}\equiv m_{A:i}^{B:d-i}\equiv m_i^{d-i}.$$
The economical one on the right will be used when the pair $(A,B)$ is fixed.

There is a natural way how to using elementary row $\mathbb{Z}_p$-operations transform $A$ into $A'$ being in so called \textit{localized form}, where  we can easily compute $m_{A':i,I:d-i}$ (hence $m_{A:i,I:d-i}$ as well by Lemma~\ref{lemIn1:1}). The term \textit{localized form} is not standard, it is used only in this article.
\begin{defn}\label{localForm}
We say that a matrix $A\in \mathrm{GL}_d(\mathbb{Q}_p)$ is in localized form if there is a permutation $\pi\in S_d$, so that for each $i=1,\ldots,d$, we have:
\begin{enumerate}[(i)]
\item\label{loc1} $A_{i,\pi(i)}$ is a power of $p$ and $\nu(A_{i,\pi(i)})$ is minimal among $\nu(A_{j,k})$, $i\leq j\leq d,1\leq k\leq d$,
\item\label{loc2} $A_{j,\pi(i)}=0$ for all $j>i$,
\item\label{loc3} $\nu(A_{i,j}) > \nu(A_{i,\pi(i)})$ for all $j< \pi(i)$.
\end{enumerate}
\end{defn}
\begin{example}\label{exOfLocalForm}
Following matrix is in localized form with $\pi=(1,2,3)$:
$$A=\left( \begin{array}{ccc}
    p^2 & \mathbf{p} & p\\
    p^4 & 0 & \mathbf{p^3}\\
    \mathbf{p^{5}} & 0 & 0\\
    \end{array} \right).
$$
The elements on positions $(i,\pi(i))$, $i=1,2,3$ are stressed by bold font. Note that $|A|_p=1+3+5$.
\end{example}
\begin{lemma}\label{localizedRepresentative}
For each matrix $A\in \mathrm{GL}_d(\mathbb{Q}_p)$ there is a matrix $A'$ in localized form and $X\in \GL_d(\mathbb{Z}_p)$, such that $A'=XA$.
\end{lemma}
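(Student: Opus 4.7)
The plan is to run a variant of Gaussian elimination over $\mathbb{Z}_p$, selecting pivots by minimum valuation and processing one row at a time from top to bottom. The three elementary operations I use --- swapping two rows, scaling a row by an element of $\mathbb{Z}_p^\times$, and adding a $\mathbb{Z}_p$-multiple of one row to another --- are each left multiplication by an element of $\GL_d(\mathbb{Z}_p)$, so their composite $X$ automatically lies in $\GL_d(\mathbb{Z}_p)$.

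I would induct on the pivot row $i = 1, \ldots, d$, maintaining the invariant that conditions~\ref{loc1}--\ref{loc3} hold for rows $1, \ldots, i$ and that column $\pi(j)$ is zero strictly below row $j$ for each $j \le i$. For the inductive step, consider the submatrix $S_i$ consisting of rows $i, \ldots, d$: by the invariant the previous pivot columns $\pi(1), \ldots, \pi(i-1)$ are identically zero in $S_i$, but the current matrix is still in $\GL_d(\mathbb{Q}_p)$, so some entry of $S_i$ is nonzero. Let $v$ be the minimum valuation on nonzero entries of $S_i$, pick a row $j^* \ge i$ achieving it, and let $k^*$ be the \emph{leftmost} column of row $j^*$ attaining this minimum. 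Swap rows $i$ and $j^*$, scale row $i$ by the unit that turns the pivot into $p^v$, and set $\pi(i) := k^*$; note that $k^* \notin \{\pi(1), \ldots, \pi(i-1)\}$ automatically, because those columns are zero in row $i$. Then, for each $j > i$, the ratio $A_{j,k^*}/p^v$ lies in $\mathbb{Z}_p$ (since valuations in $S_i$ are all $\ge v$), so subtracting that multiple of row $i$ from row $j$ clears the entry. This delivers~\ref{loc2} for row $i$; conditions~\ref{loc1} and~\ref{loc3} follow by the choice of $v$ and by the leftmost-minimum convention.

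The subtle point, and in my view the main thing to verify, is that the subsequent stages, which operate only on rows $> i$, preserve the invariant already established for row $i$. Row $i$ is never touched again, so its entries and its pivot shape are unaffected. Column $\pi(i)$ remains zero below row $i$, because additions among rows that are already zero in a column keep them zero. Most importantly, the minimality clause of~\ref{loc1} could in principle be violated if a later operation created an entry of valuation smaller than $v$ somewhere in $S_i$; but unit scalings preserve valuations and additions can only raise valuations by the triangular inequality, so no such entry can appear. Therefore $v = \nu(A'_{i,\pi(i)})$ remains the minimum of $\nu$ over the required sub-block of the final matrix. Iterating through $i = 1, \ldots, d$ yields the asserted decomposition $A' = XA$ with $A'$ in localized form.
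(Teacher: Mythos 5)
Your proof is correct and follows essentially the same route as the paper's: Gaussian elimination by $\mathbb{Z}_p$-elementary row operations with a minimum-valuation pivot, the leftmost-column convention to secure condition~\eqref{loc3}, and the observation that dividing by the pivot keeps the multipliers in $\mathbb{Z}_p$. Your explicit check that later stages cannot lower valuations in the remaining block (and hence preserve the minimality clause of~\eqref{loc1}) is just a more careful spelling-out of a point the paper's proof treats briefly.
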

\begin{proof}
We will transform $A$ to $A'$ using elementary row transformations. The algorithm is similar to the classical one for making a matrix up triangular. 

We will rewrite $A$ in $d-1$ steps.
At $i$-th step (for $i\geq 1$) we find a row $A_{j,*}$, $j\geq i$, which contains an element $A_{j,k}$ so that $\nu(A_{j,k}) 
\leq \nu(A_{l,m})$ for all $l\geq i$ and all $m$; we may have freedom in choosing $A_{j,k}$, if we 
chose in row $A_{j,*}$ the one with minimal $k$, the last condition~\eqref{loc3} will hold. Now we can make zero each $A_{l,k}$, $j\neq l\geq i$ by adding $(-A_{l,k}/A_{j,k})\cdot A_{j,*}$ to $A_{l,*}$. Note that $-A_{l,k}/A_{j,k}$ is an element of $\mathbb{Z}_p$, since we have chosen $A_{j,k}$ so that $\nu(A_{j,k})\leq \nu(A_{l,k})$, hence the operations cannot decrease valuation of any element on effected position under $\nu(A_{j,k})$. Finally, we interject rows $A_{i,*}$ and $A_{j,*}$ (there will be $\pi(i)=k$).

At the end we multiply each row by an invertible element in $\mathbb{Z}_p$ to make all $A_{i,\pi(i)}$'s powers of $p$.
\end{proof}
\begin{lemma}\label{eq2.2}
Let $A$ be in localized form with $\alpha_i=|A_{i,\pi(i)}|_p$, where $\pi$ is the permutation from Definition~\ref{localForm}. Then for each $i=0,\ldots,d$ we have
\begin{equation}\label{eq1.1}
m_{A:i,I:d-i}=\sum_{j=1}^{i}\alpha_j.
\end{equation}
\end{lemma}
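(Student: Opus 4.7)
The plan is to reduce $m_{A:i, I:d-i}$ to a statement about $i \times i$ minors of $A$, then separately establish matching upper and lower bounds.

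First I would observe that each $(A{:}i, I{:}d-i)$-candidate $X$ has $d-i$ of its rows equal to standard basis vectors $e_k$ for $k$ in some set $T \subseteq \{1,\ldots,d\}$ of size $d-i$. By Laplace expansion along these $d-i$ rows (each of which has a single nonzero entry $1$ in column $k$), one gets $\det(X) = \pm \det A_{S, C}$, where $S \subseteq \{1,\ldots,d\}$ of size $i$ indexes the $A$-rows chosen and $C = \{1,\ldots,d\} \setminus T$ of size $i$ indexes the remaining columns. Consequently,
\begin{equation*}
m_{A:i, I:d-i} \;=\; \min_{|S|=|C|=i}\, \nu\!\bigl(\det A_{S,C}\bigr),
\end{equation*}
so the problem becomes: minimize the valuation of an $i \times i$ minor of $A$.

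Next I would prove the upper bound by exhibiting a minor achieving $\sum_{j=1}^i \alpha_j$. Take $S = \{1,\ldots,i\}$ and $C = \{\pi(1),\ldots,\pi(i)\}$. By condition~\eqref{loc2} of Definition~\ref{localForm}, $A_{j,\pi(k)}=0$ whenever $j>k$; hence, listing the columns in the order $\pi(1),\pi(2),\ldots,\pi(i)$, the submatrix $A_{S,C}$ is upper triangular with diagonal entries $A_{k,\pi(k)} = \pm p^{\alpha_k}$. Thus its determinant has valuation exactly $\sum_{k=1}^i \alpha_k$, giving the inequality $m_{A:i,I:d-i}\le \sum_{j=1}^i \alpha_j$.

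For the lower bound I would first record the monotonicity consequence of condition~\eqref{loc1}: since $\alpha_i=\nu(A_{i,\pi(i)})$ is minimal among all $\nu(A_{j,k})$ with $j\ge i$, we get in particular $\alpha_1\le\alpha_2\le\cdots\le\alpha_d$, and more generally $\nu(A_{j,k})\ge \alpha_j$ for every $j,k$. Now fix any $S=\{s_1<\cdots<s_i\}$ and $C$, and expand $\det A_{S,C}$ as $\sum_\tau \operatorname{sgn}(\tau)\prod_{l=1}^i A_{s_l,\tau(l)}$. Each term has valuation
\begin{equation*}
\sum_{l=1}^i \nu\!\bigl(A_{s_l,\tau(l)}\bigr) \;\ge\; \sum_{l=1}^i \alpha_{s_l} \;\ge\; \sum_{l=1}^i \alpha_l,
\end{equation*}
where the last step uses $s_l\ge l$ together with the monotonicity of the $\alpha$'s. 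The ultrametric (triangular) inequality then yields $\nu(\det A_{S,C})\ge \sum_{l=1}^i \alpha_l$, completing the proof.

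The only subtle point—the one place where condition~\eqref{loc1} of the localized form is really needed—is extracting the monotonicity $\alpha_{s_l}\ge \alpha_l$ for the lower bound; everything else is Laplace expansion plus the triangular structure guaranteed by condition~\eqref{loc2}.
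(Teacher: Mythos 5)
Your proof is correct and takes essentially the same route as the paper's: the lower bound comes from the Leibniz expansion together with condition~\eqref{loc1} of Definition~\ref{localForm} (the monotonicity $\alpha_1\le\cdots\le\alpha_d$ and $\nu(A_{j,k})\ge\alpha_j$) and the ultrametric inequality, while the upper bound uses exactly the paper's candidate built from the first $i$ rows of $A$ and the identity rows away from columns $\pi(1),\ldots,\pi(i)$, with condition~\eqref{loc2} forcing a single surviving (triangular) term. The only cosmetic difference is your preliminary Laplace-expansion reduction of $m_{A:i,I:d-i}$ to the minimal valuation of an $i\times i$ minor, which the paper leaves implicit.
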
 
\begin{proof}
Let $X$ be some {\textit{\footnotesize{(A:i,I:d-i)}}}-candidate. Every summand $s$ in 
$$\det(X)=\sum_{\rho\in S_d}\mathrm{sgn}(\rho)\cdot X_{1,\rho(1)}\cdot \cdots \cdot X_{d,\rho(d)}$$ 
is a product of $i$ elements $A_{i,j}$'s and some $1$'s and $0$'s (entries of $I$), hence 
$$\nu(s)\geq \nu(a_{1,\pi(1)}
\cdot\cdots\cdot a_{i,\pi(i)})=\sum_{j=1}^{i}\alpha_j,$$
 that follows from condition~\eqref{loc1} 
in Definition~\ref{localForm}. By the triangular inequality $|X|_p\geq \sum_{j=1}^{i}\alpha_j$. On the 
other hand, if we take the first $i$ rows of $A$ and $d-i$ rows of $I$ having $1$'s not at positions $\pi(1),\ldots,\pi(i)$, then we get a {\textit{\footnotesize{(A:i,I:d-i)}}}-candidate $X$, whose determinant has by condition~\eqref{loc2} in Definition~\ref{localForm} only one non-vanishing summand $a_{1,\pi(1)}\cdot\cdots\cdot a_{i,\pi(i)}$ (respective $1$ if $i=0$).
\end{proof}
Next corollary provides an insight in what the $m_{i}^{d-i}$'s could possibly be - they form concave sequence. That is, having some fixed $A, B\in \GL_d(K)$, for all integers $0\leq j< k<  l \leq d$ we have 
\begin{align}\label{concavity}
\frac{1}{k-j}\cdot (m_{k}^{d-k}-m_{j}^{d-j})\leq \frac{1}{l-k}\cdot(m_{l}^{d-l}-m_{k}^{d-k}).
\end{align}
That directly follows from inequality \eqref{firstEq}, which we are going to prove.
\begin{cor}[concavity]
Let $A,B\in \GL_d(K)$ and $i,j,k$ be integers such that $0\leq j\leq k\leq  l \leq d$. Then
following inequality holds and satisfies invariance conditions 
\begin{align}\label{firstEq}
\begin{aligned}
0&\leq (k-j)\cdot(m_{l}^{d-l}-m_{k}^{d-k})+(l-k)\cdot (m_{j}^{d-j}-m_{k}^{d-k})=\\
&=(k-j)\cdot m_{l}^{d-l}+(j-l)\cdot m_{k}^{d-k}+(l-k)\cdot m_{j}^{d-j}.
\end{aligned}
\end{align}
Where $m_{i}^{d-i}=m_{A:i,B:d-i}$.
\end{cor}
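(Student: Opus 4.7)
The plan is to exploit the invariance apparatus of Proposition~\ref{invarianceCon} to reduce to a canonical situation, and then read off the inequality from a trivial averaging argument.

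First I would verify that the right-hand side of~\eqref{firstEq} satisfies all three invariance conditions. The only non-zero coefficients are $a_l = k-j$, $a_k = j-l$, $a_j = l-k$; their sum is $(k-j)+(j-l)+(l-k)=0$, and the index-weighted sum is $l(k-j)+k(j-l)+j(l-k)=0$. Hence by Proposition~\ref{invarianceCon} the expression is a well-defined function of the vertex pair $(A_v, B_v)$ and is moreover invariant under the simultaneous right-multiplication $(A,B)\mapsto (AT,BT)$.

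I would now use this freedom to reduce to a normal form. Taking $T=B^{-1}$ puts us in the case $B=I$, and then Lemma~\ref{localizedRepresentative} produces an $X\in \GL_d(\mathbb{Z}_p)$ such that $XA$ is in localized form; by Lemma~\ref{lemIn1:1} this left-multiplication does not alter any $m_i^{d-i}$. Write $\alpha_i=\nu(A_{i,\pi(i)})$ for the associated exponents. Lemma~\ref{eq2.2} then provides the clean formula $m_i^{d-i}=\alpha_1+\cdots+\alpha_i$, while condition~\eqref{loc1} of Definition~\ref{localForm}, applied to consecutive indices, yields the monotonicity $\alpha_1\leq\alpha_2\leq\cdots\leq\alpha_d$.

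Substituting these expressions, the inequality~\eqref{firstEq} becomes
\[
(k-j)\sum_{r=k+1}^{l}\alpha_r \;\geq\; (l-k)\sum_{r=j+1}^{k}\alpha_r,
\]
which after dividing by $(l-k)(k-j)$ asserts that the average of the $\alpha_r$'s over the block $[k+1,l]$ dominates the average over the block $[j+1,k]$. Since the two index blocks are consecutive and the sequence $\alpha_\bullet$ is non-decreasing, every term on the right is bounded by every term on the left, so the inequality is immediate. The edge cases $k=j$ or $k=l$ collapse both sides to $0$ and are trivial.

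The only genuine subtlety is the opening reduction: one must notice that \emph{all} three invariance conditions of Proposition~\ref{invarianceCon} are in force, so the full arsenal (change of vertex representatives, change of basis, and right-multiplication) is available. Once the situation is normalized to a localized $A$ against $I$, the problem dissolves into an elementary monotone-averaging statement, and I foresee no further obstacle.
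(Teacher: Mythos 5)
Your proof is correct and follows essentially the same route as the paper: verify the invariance conditions, reduce via $(A,B)\rightsquigarrow(AB^{-1},I)$ to a localized representative, apply Lemma~\ref{eq2.2}, and conclude from the monotonicity of the $\alpha_i$'s by comparing the two blocks of $(k-j)(l-k)$ terms.
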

\begin{proof}
The verification of invariance conditions is straightforward as well as proving the equality. Because of the invariance conditions, the value of the formula does not change if we exchange $A,B$ as follows: $(A,B)\rightsquigarrow (AB^{-1},I)\rightsquigarrow (A',I)$, where matrix $A'\sim_v AB^{-1}$ is in localized form. The existence of $A'$ is ensured by Lemma~\ref{localizedRepresentative}. Now let us apply Lemma~\ref{eq2.2} for $A'$:

$$(k-j)\cdot(m_{A':l,I:d-l}-m_{A':k,I:d-k})+(l-k)\cdot (m_{A':j,I:d-j}-m_{A':k,I:d-k})=$$
$$=\left((k-j)\cdot \left(\sum_{i=1}^{l}\alpha_i-\sum_{i=1}^{k}\alpha_i\right)\right)-\left((l-k)\cdot\left(\sum_{i=1}^{k}\alpha_{i} -\sum_{i=1}^{j}\alpha_{j}\right)\right)=$$
$$=\left((k-j)\cdot \sum_{i=k+1}^{l}\alpha_i\right)-\left((l-k)\cdot\sum_{i=j+1}^{k}\alpha_{i} \right).$$
The $\alpha_j$'s are defined in Lemma~\ref{eq2.2}. In both parentheses is a sum of $(k-j)\times (l-k)$ $\alpha_i$'s and each $\alpha_i$ in the first parentheses is greater than any $\alpha_i$ in the second one. Hence the inequality holds.
\end{proof}

\section{The relative position of two vertices}\label{sec6}
In the section we introduce some formulas describing relative positions of two given vertices. The strategy will be as follows: having two vertices $\alpha$ and $\beta$ we choose a basis $\mathcal{F}$ so that $\beta=I_{v:\mathcal{F}}$ and then we find a matrix $A$ in localized form such that $\alpha=A_{v:\mathcal{F}}$. In this situation we can compute $m_{A:i,I:d-i}$ and investigate if there is a geometric meaning of some given formula satisfying invariance conditions.

Once we have a matrix $A$ in localized form (with permutation $\pi$), we can easily construct an apartment containing $A_v$ and $I_v$ and a walk connecting them. We show this fact on an example, but the construction works in the general case, hence we will formulate some observations in general setting. Express the matrix $A$ in localized form from Example~\ref{exOfLocalForm} as a product $DA'$, $A'\in\mathrm{GL}_d(\mathbb{Z}_p)$:
$$
A=\left( \begin{array}{ccc}
    p^2 & p & p\\
    p^4 & 0 & p^3\\
    p^{5} & 0 & 0\\
    \end{array} \right)=\left( \begin{array}{ccc}
    p & 0 & 0\\
    0 & p^3 & 0\\
    0 & 0 & p^5\\
    \end{array} \right)\cdot
    \left( \begin{array}{ccc}
    p & 1 & 1\\
    p & 0 & 1\\
    1 & 0 & 0\\
    \end{array} \right).
$$
We have $\alpha_1=1,\alpha_2=3,\alpha_3=5$ and $|A|_p=6$. 
See that $A'_v=I_v$ and $A_v\in A'_a$, hence both vertices $I_v$ and $A_v$ lie in the apartment $A'_a$. Because we can make $I_v$ an arbitrary vertex (by choosing suitable basis $\mathcal{F}$), we observe a weak version of axiom~\eqref{ax2}:
\begin{obs}\label{obs0}
Every two vertices have some common apartment.
\end{obs}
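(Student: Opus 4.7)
The plan is to formalize the construction sketched in the example immediately preceding the observation. Given two vertices $\alpha,\beta\in\mathscr{B}^0$, I would first choose the basis $\mathcal{F}$ to simplify the description of $\beta$. Since $\beta$ is represented by some $\mathbb{Z}_p$-lattice $\mathcal{L}\subset\mathbb{Q}_p^d$, the theorem of elementary divisors provides a $\mathbb{Z}_p$-basis of $\mathcal{L}$, which I take as $\mathcal{F}$. Then $\beta=I_{v:\mathcal{F}}$, and I write $\alpha=A_{v:\mathcal{F}}$ for some $A\in\GL_d(\mathbb{Q}_p)$. By Lemma~\ref{localizedRepresentative} combined with Lemma~\ref{lemUniquenesOfRep}, I may further assume without loss of generality that $A$ is already in localized form, with associated permutation $\pi$.

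The central step is the factorization $A=DA'$, where
$$D=\mathrm{diag}(A_{1,\pi(1)},\ldots,A_{d,\pi(d)})$$
and $A'$ is the matrix with $(i,j)$-entry $A_{i,j}/A_{i,\pi(i)}$. Condition~\eqref{loc1} of Definition~\ref{localForm} (applied to $j=i$, any column) gives $\nu(A_{i,k})\geq\nu(A_{i,\pi(i)})$ for all $k$, so all entries of $A'$ lie in $\mathbb{Z}_p$; comparing valuations of $\det(A)$ and $\det(D)$ shows $\det(A')\in\mathbb{Z}_p^\times$, hence $A'\in\GL_d(\mathbb{Z}_p)$.

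To finish, consider the apartment $A'_{a:\mathcal{F}}$, whose vertices are precisely $(D''A')_{v:\mathcal{F}}$ for $D''\in\mathrm{Diag}_d(\mathbb{Q}_p)^\times$. Setting $D''=I$ yields $A'_v$, which equals $I_v=\beta$ by Lemma~\ref{lemUniquenesOfRep} applied with $X=(A')^{-1}\in\GL_d(\mathbb{Z}_p)$; setting $D''=D$ yields $(DA')_v=A_v=\alpha$. Thus $A'_{a:\mathcal{F}}$ is a common apartment for both $\alpha$ and $\beta$. I anticipate no real obstacle: the argument is a direct transcription of the example, and the only point requiring care is the integrality of $A'$, which is immediate from the definition of localized form.
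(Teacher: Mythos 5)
Your argument is correct and is essentially the paper's own: the observation is obtained exactly as in the example preceding it, by normalizing $\beta=I_{v:\mathcal{F}}$, putting $A$ in localized form, factoring $A=DA'$ with $D$ diagonal and $A'\in\GL_d(\mathbb{Z}_p)$, and noting that the apartment $A'_{a:\mathcal{F}}$ contains both $A'_v=I_v$ and $(DA')_v=A_v$. The only detail you gloss over, that $\nu(\det A)=\sum_i\nu(A_{i,\pi(i)})$ so that $\det(A')$ is a unit, follows immediately from Lemma~\ref{eq2.2} with $i=d$ (or from condition~(\ref{loc2}) making $A$ column-permuted triangular), so there is no gap.
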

Furthermore, in the apartment $A'_a$ we have following walk from $I_v$ to $A_v$:
$$I_v=\big(\mathrm{diag}(1,1,1)\cdot A'\big)_v=\big(\mathrm{diag}(p,p,p)\cdot A'\big)_v\leadsto\big(\mathrm{diag}(p,p^2,p^2)\cdot A'\big)_v\leadsto$$
$$\rightsquigarrow\big(\mathrm{diag}(p,p^3,p^3)\cdot A'\big)_v\leadsto\big(\mathrm{diag}(p,p^3,p^4)\cdot A'\big)_v\leadsto\big(\mathrm{diag}(p,p^3,p^5)\cdot A'\big)_v=A_v.$$
Notice that the length of the walk is $\alpha_3-\alpha_1=5-1=4$. 

\begin{obs}\label{obs1}
If $A$ is in localized form and $\alpha_i$'s are as in Definition~\ref{localizedRepresentative}, then there is a walk from $I_v$ to $A_v$ of length $\alpha_d-\alpha_1$.
\end{obs}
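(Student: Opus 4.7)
The plan is to generalize the explicit construction shown in Example~\ref{exOfLocalForm}. I would factor $A = D \cdot A'$, where $D = \mathrm{diag}(p^{\alpha_1}, \ldots, p^{\alpha_d})$ and $A'$ is obtained from $A$ by dividing row $i$ by $p^{\alpha_i}$, and then exhibit an explicit walk inside the apartment $A'_a$ consisting of diagonal representatives over the rows of $A'$.

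The membership $A' \in \GL_d(\mathbb{Z}_p)$ follows from Definition~\ref{localForm}: condition~\eqref{loc1} ensures that every entry of row $i$ of $A$ has valuation at least $\alpha_i$, so all entries of $A'$ lie in $\mathbb{Z}_p$ and $A'_{i,\pi(i)} = 1$. By condition~\eqref{loc2}, $A_{i,\pi(j)} = 0$ for $i > j$, so any nonvanishing term $\prod_i A_{i,\sigma(i)}$ in the Leibniz expansion of $\det A$ must satisfy $\sigma(i) \neq \pi(j)$ whenever $i > j$; setting $\tau = \pi^{-1}\sigma$, this says $\tau(i) \geq i$ for all $i$, which forces $\tau = \mathrm{id}$ and hence $\sigma = \pi$. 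Therefore $\nu(\det A) = \sum_i \alpha_i = \nu(\det D)$, so $\det A' \in \mathbb{Z}_p^\times$. Lemma~\ref{lemUniquenesOfRep} then gives $I_v = A'_v \in A'_a$, while $A_v = (DA')_v \in A'_a$ by the very definition of an apartment, placing both vertices in $A'_a$.

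By~\eqref{loc1} the exponents satisfy $\alpha_1 \leq \cdots \leq \alpha_d$. Using homothety, I represent $I_v$ by the diagonal exponent vector $(\alpha_1,\ldots,\alpha_1)$ and define
\[
v_k = \bigl(\mathrm{diag}(p^{a^{(k)}_1},\ldots,p^{a^{(k)}_d}) \cdot A'\bigr)_v, \qquad a^{(k)}_i = \alpha_1 + \min(k, \alpha_i - \alpha_1),
\]
for $k = 0, \ldots, \alpha_d - \alpha_1$, so that $v_0 = I_v$ and $v_{\alpha_d - \alpha_1} = A_v$. Passing from $v_{k-1}$ to $v_k$ increases by $1$ exactly those exponents indexed by $S_k = \{i : \alpha_i - \alpha_1 \geq k\}$. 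Since $d \in S_k$ and $1 \notin S_k$, we have $\emptyset \neq S_k \subsetneq \{1,\ldots,d\}$, which in the lattice language of~\eqref{eqChain} is precisely the strict chain $\mathcal{L}_{k-1} \supsetneq \mathcal{L}_k \supsetneq p\mathcal{L}_{k-1}$. Hence each $(v_{k-1}, v_k)$ is a genuine edge of $\mathscr{B}$, and the walk has length exactly $\alpha_d - \alpha_1$. The construction is explicit, and the only technical points are the unimodularity of $A'$ and the properness of each $S_k$, neither of which is a serious obstacle.
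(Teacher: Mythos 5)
Your construction is essentially the paper's own: the paper factors $A = D\cdot A'$ with $A'\in\GL_d(\mathbb{Z}_p)$, places both $I_v=A'_v$ and $A_v$ in the apartment $A'_a$, and walks by raising the diagonal exponents one step at a time (shown on Example~\ref{exOfLocalForm} and asserted to generalize), and your general formula for the walk reproduces that example exactly. The proposal is correct; it simply makes explicit the verifications (unimodularity of $A'$ via the zero pattern of condition~\eqref{loc2}, monotonicity $\alpha_1\leq\cdots\leq\alpha_d$ from~\eqref{loc1}, and properness of each step set $S_k$ giving the chain $\mathcal{L}_{k-1}\supsetneq\mathcal{L}_k\supsetneq p\mathcal{L}_{k-1}$) that the paper leaves implicit.
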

We will later observe (in Remark~\ref{remTheWalkIsPath}) that the walk is a path (it has no repetition in edges neither in vertices). The fact that we can ``localize" the vertex $A'_v$, with respect to the origin $I_v$, is the motivation for the term \textit{localized form}.

\begin{obs}\label{obs2}
If vertices $\alpha$ and $\beta$ are connected by some edge and $\mathcal{F}$ is a basis, we can choose matrix representatives $\alpha=A_{v:\mathcal{F}}$ and $\beta=B_{v:\mathcal{F}}$ such that $B$ comes from $A$ by multiplying some (but not all) rows by $p$.
\end{obs}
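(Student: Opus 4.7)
The plan is to combine Lemma~\ref{lemHelpInIntro} (in the case of a $1$-simplex) with the elementary divisor theorem applied to the pair of lattices witnessing the edge. First I would invoke Lemma~\ref{lemHelpInIntro} to fix lattices $\mathcal{L}_0 \supsetneq \mathcal{L}_1 \supsetneq p\mathcal{L}_0$ such that $\alpha=[\mathcal{L}_0]$ and $\beta=[\mathcal{L}_1]$. The strict containments force $\mathcal{L}_0/\mathcal{L}_1$ to be a nonzero proper $\mathbb{F}_p$-vector space, of some dimension $k$ with $1 \leq k \leq d-1$.

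Next, since $\mathcal{L}_1$ and $\mathcal{L}_0$ are both free $\mathbb{Z}_p$-modules of rank $d$, the elementary divisor theorem (cited in the preliminaries) produces a basis $v_1,\dots,v_d$ of $\mathcal{L}_0$ and non-negative integers $e_1 \leq \cdots \leq e_d$ for which $p^{e_1}v_1,\dots,p^{e_d}v_d$ is a basis of $\mathcal{L}_1$. The sandwich $p\mathcal{L}_0 \subseteq \mathcal{L}_1 \subseteq \mathcal{L}_0$ immediately forces each $e_i \in \{0,1\}$ (if some $e_i \geq 2$, then $pv_i$ would fail to lie in $\mathcal{L}_1$), and the two strict inclusions exclude the all-zero and all-one sequences. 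After a suitable permutation I may arrange $e_1 = \cdots = e_k = 1$ and $e_{k+1} = \cdots = e_d = 0$.

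Finally, let $A$ be the matrix whose $i$-th row records the coordinates of $v_i$ with respect to $\mathcal{F}$, and let $B$ arise from $A$ by multiplying its first $k$ rows by $p$. By Definition~\ref{defRep} the rows of $A$ generate $\mathcal{L}_0$ (so $A_{v:\mathcal{F}}=\alpha$) and the rows of $B$ generate $\mathcal{L}_1$ (so $B_{v:\mathcal{F}}=\beta$); since $1 \leq k \leq d-1$, some but not all rows have been multiplied by $p$, as required. There is no real obstacle here: the whole argument is an unpacking of the lattice witness, and the only careful bookkeeping lies in using the two strict inclusions to rule out the degenerate cases $k=0$ and $k=d$.
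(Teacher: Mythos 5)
Your proof is correct, and it takes a somewhat different route from the paper's. The paper reduces to the case where the edge lies in the standard apartment $I_a$ (using Observation~\ref{obs0}), takes diagonal representatives $A,B$ with $\mathrm{Row}_{\mathbb{Z}_p}(A)\supset\mathrm{Row}_{\mathbb{Z}_p}(B)\supset p\,\mathrm{Row}_{\mathbb{Z}_p}(A)$, adjusts rows of $B$ by units, and transports the conclusion to an arbitrary basis $\mathcal{F}$ via Lemma~\ref{lemChangeOfBasis} (right multiplication by a transition matrix does not disturb the relation ``$B$ equals $A$ with some rows multiplied by $p$''). You instead work directly with the sandwiched lattices $\mathcal{L}_0\supsetneq\mathcal{L}_1\supsetneq p\mathcal{L}_0$ witnessing the edge and apply the elementary divisor theorem to the pair $\mathcal{L}_1\subseteq\mathcal{L}_0$, correctly reading off that every elementary divisor is $1$ or $p$ (since $pv_i\in p\mathcal{L}_0\subseteq\mathcal{L}_1$ forces $e_i\leq 1$) and that the strict inclusions force both values to occur; expressing the adapted basis in coordinates with respect to $\mathcal{F}$ then yields $A$ and $B$ at once. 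The two arguments rest on the same structure theory --- the common apartment behind Observation~\ref{obs0} is itself a manifestation of the elementary divisor theorem --- but yours is more self-contained (no appeal to localized form, apartments, or change of basis), while the paper's is shorter given the machinery already developed. One small point worth a sentence: the chain supplied by Lemma~\ref{lemHelpInIntro} might present $\beta$ as the larger lattice; your tacit relabelling $\alpha=[\mathcal{L}_0]$, $\beta=[\mathcal{L}_1]$ is harmless because the asserted relation is symmetric up to homothety (if $A$ arises from $B$ by multiplying the rows indexed by $S$ by $p$, then $pB$ arises from $A$ by multiplying the rows indexed by the complement of $S$ by $p$, and $pB$ still represents $\beta$), but making this explicit would close the only gap in the reduction.
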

\begin{proof}
If the edge belongs to the apartment $I_a$, there are diagonal representatives $A,B$ such that 
$\mathrm{Row}_{\mathbb{Z}_p}(A)\supset \mathrm{Row}_{\mathbb{Z}_p}(B) \supset p\cdot \mathrm{Row}_{\mathbb{Z}_p}(A)$. After multiplying rows of $B$ by convenient units of $\mathbb{Z}_p$, we are done. In the general case we can use Observation~\ref{obs0} to arrive at the foregoing case. Skipping between bases corresponds to multiplying matrix representatives by some matrix from right (Lemma~\ref{lemChangeOfBasis}), which does not affect the desired relation between $A$ and $B$.
\end{proof}
Now we are ready to prove one of our main results: Theorem~\ref{distance}.
\begin{proof}[Proof of Theorem \ref{distance}]
Equation~(\ref{eqDistance}) respects the invariance conditions from Proposition~\ref{invarianceCon}, hence we can choose the basis $\mathcal{F}$ so that $\alpha$ and $\beta$ have matrix representatives $\alpha=A_v$ and $\beta=I_v$, where $A$ is in localized form. Using Lemma~\ref{eq2.2} we obtain $m_{0}^{d}-m_{1}^{d-1}= -\alpha_1$ and $m_{d}^{0}-m_{d-1}^{1}=\alpha_d$. By Observation~\ref{obs1} there is a walk of the length given by formula~\eqref{eqDistance}. It remains to prove there is no shorter path, which is an easy consequence of the following claim:

\textit{Claim:} The value of formula~\eqref{eqDistance} can change by replacing $\beta$ by some its neighbour $\beta'$ only by $\pm 1$.
\begin{proof}[Proof of the Claim]
By Observation~\ref{obs2} we can suppose $\beta=B_v$ and $\beta'=B'_v$, where $B'$ comes from $B$ by multiplying say $k<d$ rows by $p$. Choose any representative $A_v=\alpha$. Let us see how the summands of formula~\eqref{eqDistance} change under $B\rightsquigarrow B'$: $m_{A:d}^{B:0}=|A|_p$ stays the same and $-m_{A:d-1}^{B:1}$ could stay the same or decrease by $-1$ if the {\textit{\footnotesize{(A:d-1,C':1)}}}-witness contains one of the $k$ multiplied rows. Next, the number $m_{A:0}^{B:d}=|B|_p$ increases by $+k$ but the {\textit{\footnotesize{(A:1,B:d-1)}}}-witness contains at least $k-1$ multiplied rows, hence $-m_{A:1}^{C:d-1}$ decreases by $-k$ or $-(k-1)$, that is $m_{A:0}^{B:d}-m_{A:1}^{B:d-1}$ stays the same or increases by $+1$. Summing up, the whole formula can change only by $\pm 1$.
\end{proof}
\textit{finishing of the proof:}
If $A_v=B_v$ formula~\eqref{eqDistance} clearly equals zero. And if $A_v\not= B_v$, following any path from $\alpha$ to $\beta$ we need at least as many steps as the value of formula~\eqref{eqDistance}, because of the Claim.
\end{proof}
\begin{rem}\label{remTheWalkIsPath}
The value of the formula~\eqref{eqDistance} gives us the length of the walk from Observation~\ref{obs1} (as is clear form just finished proof), hence the walk is a path.
\end{rem}
\begin{rem}
The minimal path from Observation~\ref{obs1} 
lies on one apartment. Hence, there is no shortcut passing throuht several apartments, the obvious path from Observation~\ref{obs1} is the shortest one.
\end{rem}
\begin{example}
Notice that $$m_{A:d}^{B:0}-m_{A:0}^{B:d} \pmod{d}\equiv|A|_p-|B|_p \pmod{d}$$ satisfies invariance conditions modulo $d$. Fixing any vertex $\beta=B_v$ this formula well-defines mapping $\mathscr{B}^{0}\rightarrow |\mathbb{Z}_p|$ which extends to simplicial retraction $c_\beta$ of the whole building to the (external) $(d-1)$-simplex over elements of $\mathbb{Z}_d$. This simplicial retraction is called \textit{labeling} (For more details on labelings see~\cite{Gar}, section 4.4).
\end{example}
\begin{example}
Consider subgraph $\mathscr{B}\{1\}\subseteq\mathscr{B}^1$ where we take only edges $\{A_v,B_v\}$ with $|A|_p-|B|_p\pmod{d}\equiv\pm 1$. That is, for each edge we can take matrices representatives of the two vertices in such a way, that one arises from the other by multiplying one of its row by $p^{\pm 1}$. Distance in this graph equals
\begin{equation}\label{elDis}
m_{0}^{d}+m_{d}^{0}-m_{\lfloor \frac{d}{2}\rfloor}^{\lceil \frac{d}{2}\rceil}-m_{\lceil \frac{d}{2}\rceil}^{\lfloor \frac{d}{2}\rfloor}.
\end{equation}
The strategy of the proof follows the one of Theorem~\ref{distance}. Quick sketch: suppose $B=I$, $A$ is in localized form and moreover $\alpha_{\lceil \frac{d}{2}\rceil}=0$ (use the relation $A\sim_v p^j\cdot A$ for convenient $j$). Then there is in $\mathscr{B}\{1\}$ a path from $I_v$ to $A_v$ of the length 
$$\sum_{i=1}^d |\alpha_i|=\sum_{i=\lceil\frac{d}{2}\rceil+1}^{d}\alpha_i+\sum_{i=1}^{\lfloor \frac{d}{2}\rfloor}-\alpha_i
=\sum_{i=1}^{d}\alpha_i+0-\sum_{i=1}^{\lceil\frac{d}{2}\rceil}\alpha_i-\sum_{i=1}^{\lfloor \frac{d}{2}\rfloor}\alpha_i=\eqref{elDis}.$$
It remains to prove the variant of the Claim fitting in our situation.
\end{example}
\begin{example}\label{exDirectedGraph}
Consider even more restricted graph $\mathscr{B}[1]$, with directed edges $(B_v,A_v)$ such that $|A|_p-|B|_p\pmod{d}\equiv 1$. We see again that in the situation $B=I$, $A$ is in localized form with $\alpha_1=0$ we can get from $I_v$ to $A_v$ in $|A|_p$ steps. Since $0=\alpha_1=m^{A:1}_{I:d-1}$  and $m_{I:d}^{A:0}=|I|_p=0$ we have
\begin{equation}\label{directedDistance}
\overrightarrow{\delta}(I_v,A_v)\leq |A|_p = m^{A:d}_{I:0}-d\cdot m^{A:1}_{I:d-1}+(d-1)\cdot m^{A:0}_{I:d}.
\end{equation}
The last formula satisfies the invariance conditions and  similarly as in the case of Theorem~\ref{distance} we observe that each path from $B_v$ to $A_v$ has at least this length (hence equality holds in~\eqref{directedDistance}). 
\end{example}
\begin{rem}\label{SmithRem}
Well-known Principal divisors theorem says that every square matrix $A$ over PID ($\mathbb{Z}_p$ is PID) can be written as a product $XDY$ where $X,Y$ are invertible and $D$ diagonal. Moreover, diagonal entries of $D$ are unique up to permuting rows and multiplying rows by units. We also have explicit formulas of them: $D_{i,i}=\frac{d_i}{d_{i-1}}$ where $d_i$ is the greatest common divisor of all $i\times i$ minors of $A$. In our situation of $\mathbb{Z}_p$ we have only one prime ideal $(p)$, hence $d_i$ equals $p$ to the minimal valuation of $i\times i$ minors of $A$, that is $\nu(d_i)$ equals exactly $m_{A:i}^{I:d-i}$. 

We see that $\nu(D_{i,i})=m_{A:i}^{I:d-i}-m_{A:i-1}^{I:d-i+1}$ which we can calibrate into formula
$$\nu(D_{i,i})-\nu(D_{1,1})=m_{A:i}^{I:d-i}-m_{A:i-1}^{I:d-i+1}-m_{A:1}^{I:d-1}+m_{A:0}^{I:d},$$
which already satisfies invariance conditions.
\end{rem}

\begin{defn}\label{defRelCoordinates}
For any pair of vertices $\alpha=A_v$ and $\beta=B_v$ define the relative coordinates $\{\lambda_{i,\alpha:\beta}\}_{i=1}^{d}$ of $\alpha$ to the $\beta$ as:
$$\lambda_{i,\alpha:\beta}=m_{A:i,B:d-i}-m_{A:i-1,B:d-i+1}+m_{A:0,B:d} - m_{A:1,B:d-1}.$$
\end{defn}
\begin{lemma}\label{lemRelCoorAreSorted}
Let $\alpha=A_v$ and $\beta=B_v$ be any pair of vertices. Define $m_{k}^{d-k}=m_{A:k,B:d-k}$. For each $1\leq i <d$ we have
$$\lambda_{i+1,\alpha:\beta}-\lambda_{i,\alpha:\beta}=m_{i-1}^{d-i+1}-2m_{i}^{d-i}+m_{i+1}^{d-i-1}\geq 0.$$
Consequently, the relative coordinates of the $\alpha$ to the $\beta$ are sorted by size.
\end{lemma}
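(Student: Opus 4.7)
The proof should be essentially immediate, unfolding in two steps: an algebraic identity followed by an appeal to concavity.

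First, I would simply expand both sides of the claimed equality using the definition of relative coordinates (Definition~\ref{defRelCoordinates}). Writing
\begin{align*}
\lambda_{i+1,\alpha:\beta} &= m_{i+1}^{d-i-1} - m_{i}^{d-i} + m_{0}^{d} - m_{1}^{d-1},\\
\lambda_{i,\alpha:\beta} &= m_{i}^{d-i} - m_{i-1}^{d-i+1} + m_{0}^{d} - m_{1}^{d-1},
\end{align*}
the constant terms $m_{0}^{d}-m_{1}^{d-1}$ cancel upon subtraction, and one reads off directly
$$\lambda_{i+1,\alpha:\beta}-\lambda_{i,\alpha:\beta} = m_{i-1}^{d-i+1}-2m_{i}^{d-i}+m_{i+1}^{d-i-1}.$$
This is the first (equality) part of the statement and requires no further work.

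For the inequality, I would invoke the concavity corollary~\eqref{concavity} with the choice $j=i-1$, $k=i$, $l=i+1$. Those indices satisfy $0\leq j<k<l\leq d$ as required (since $1\leq i<d$), and the corollary then gives
$$\tfrac{1}{1}\bigl(m_{i}^{d-i}-m_{i-1}^{d-i+1}\bigr)\leq \tfrac{1}{1}\bigl(m_{i+1}^{d-i-1}-m_{i}^{d-i}\bigr),$$
which rearranges immediately to $m_{i-1}^{d-i+1}-2m_{i}^{d-i}+m_{i+1}^{d-i-1}\geq 0$. Combined with the identity above, this yields $\lambda_{i+1,\alpha:\beta}\geq \lambda_{i,\alpha:\beta}$, so the coordinates are weakly sorted by size, giving the ``consequently'' clause.

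There is no real obstacle here: all the heavy lifting was done in establishing concavity of the sequence $(m_i^{d-i})_{i=0}^{d}$ in the preceding corollary. The only thing to be careful about is the cancellation of the two ``constant'' summands $m_{0}^{d}-m_{1}^{d-1}$ in the definition of $\lambda$, which was precisely inserted to normalise the sequence while preserving the invariance conditions; it plays no role in the monotonicity claim itself.
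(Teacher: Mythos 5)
Your proposal is correct and matches the paper's proof: the identity is read off directly from Definition~\ref{defRelCoordinates} (the terms $m_{0}^{d}-m_{1}^{d-1}$ cancelling), and the inequality is exactly inequality~\eqref{firstEq} with $j=i-1$, $k=i$, $l=i+1$, which is the concavity statement you invoke. No gaps.
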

\begin{proof}
The equality follows directly from the definition of $\lambda_{i,\alpha:\beta}$'s and the inequality follows from inequality~\eqref{firstEq} for $0\leq j:=(i-1)\leq k:=i\leq l:=(i+1)\leq d$.
\end{proof}

If we are interested only in vertices in one  fixed apartment (hence we need only diagonal representatives), the formula of distance is quite simple:
\begin{prop}\label{propDistanceLoc}
Let $A,B\in \mathrm{Diag}^\times_d(\mathbb{Q}_p)$, define $\alpha_i=\nu(A_{i,i})$ and $\beta_i=\nu(B_{i,i})$, for $i=1,\ldots,d$. Then
\begin{align}\label{locHalfOfDis}
\begin{aligned}
m_{B:0}^{A:d}-m_{B:1}
^{A:d-1}&=\max_{i}\{\alpha_i-\beta_i\},\\ 
m_{B:d}^{A:0}-m_{B:d-1}
^{A:1}&=\max_{i}\{\beta_i-\alpha_i\}.
\end{aligned}
\end{align}
Consequently
\begin{align}\label{vyznamVzdalenosti}
\begin{aligned}
\delta(A_v,B_v)&=\max_{i=1}^{d}\{\alpha_i-\beta_i\}+\max_{i=1}^{d}\{\beta_i-\alpha_i\}.
\end{aligned}
\end{align}
\end{prop}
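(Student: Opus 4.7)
The plan is to exploit the diagonal form of $A$ and $B$ to reduce the combinatorial minimization defining $m_{A:i,B:d-i}$ to sorting a single $d$-tuple of integers. Write $A_{j,j}=u_jp^{\alpha_j}$ and $B_{j,j}=v_jp^{\beta_j}$ with units $u_j,v_j\in\mathbb{Z}_p^{\times}$. Since all off-diagonal entries vanish, every row of $A$ (respectively $B$) is supported on a single column. A candidate $X$ is then assembled from $i$ rows of $A$ indexed by some set $I_A\subseteq\{1,\ldots,d\}$ of cardinality $i$ together with $d-i$ rows of $B$ indexed by a set $I_B$, and each row of $X$ carries exactly one non-zero entry.

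The first step will be to observe that $\det(X)=0$ unless $I_A$ and $I_B$ are disjoint and hence partition $\{1,\ldots,d\}$; otherwise two rows of $X$ share a column while some other column is left empty. On any admissible partition, $X$ is monomial, so
\[
|X|_p=\sum_{j\in I_A}\alpha_j+\sum_{j\in I_B}\beta_j=\sum_{j=1}^d\beta_j+\sum_{j\in I_A}\gamma_j,
\]
where $\gamma_j:=\alpha_j-\beta_j$. Sorting $\gamma_{(1)}\leq\cdots\leq\gamma_{(d)}$, the minimum over subsets of cardinality $i$ is achieved by letting $I_A$ index the $i$ smallest $\gamma_j$'s, which yields the closed form
\[
m_{A:i,B:d-i}=\sum_{j=1}^d\beta_j+\sum_{k=1}^i\gamma_{(k)}.
\]

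From this formula the first half of~\eqref{locHalfOfDis} drops out as $m_{A:d,B:0}-m_{A:d-1,B:1}=\gamma_{(d)}=\max_j(\alpha_j-\beta_j)$, and the second half follows from the symmetric computation, i.e., by replacing each $\gamma_j$ with $-\gamma_j$. The distance identity~\eqref{vyznamVzdalenosti} is then obtained by substituting both halves into the formula $\delta(A_v,B_v)=m^d_0+m^0_d-m^{d-1}_1-m^1_{d-1}$ of Theorem~\ref{distance}, regrouped as $(m^0_d-m^1_{d-1})+(m^d_0-m^{d-1}_1)$.

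No serious obstacle is expected: the diagonal hypothesis collapses the entire combinatorial optimization to the sorting of a $d$-tuple, and the only care needed is the disjointness argument that rules out singular candidates, together with the bookkeeping between the equivalent notations $m_{A:i,B:d-i}=m_{A:i}^{B:d-i}=m_i^{d-i}$ used in the statement.
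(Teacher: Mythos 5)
Your proof is correct and follows essentially the same route as the paper: both exploit that for invertible diagonal $A,B$ the only nonsingular candidates are monomial matrices determined by a partition of the index set, which reduces each $m_{A:i,B:d-i}$ to an elementary minimization. Your closed form $m_{A:i,B:d-i}=\sum_j\beta_j+\sum_{k=1}^i\gamma_{(k)}$ via sorting is slightly more general than the two differences the paper computes directly, but the underlying argument is the same.
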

\begin{proof}
The formulas follows from this computation:
\begin{align*}
&m_{B:0}^{A:d}-m_{B:1}^{A:d-1}=
(\sum_j \alpha_j) - \min_{i=1}^d\{(\sum_{j\neq i} \alpha_j)+\beta_i\}=\\
=&-\min_{i=1}^d\{-(\sum_j \alpha_j)+
(\sum_{j\neq i} \alpha_j)+\beta_i\}
=-\min_{i=1}^d\{\beta_i-\alpha_{i}\}=\max_{i=1}^d\{\alpha_i-\beta_i\}.
\end{align*}
Where the first equality follows directly from definition of $m_{B:i}^{A:d-i}$'s. 
\end{proof}

\section{Distance formulas for more than two vertices}\label{sec7}
Let $\alpha=A_v$, $\beta=B_v$ and $\gamma=C_v$ be any three vertices in the building $\mathscr{B}$. Consider the following formula
$$\delta(\alpha,\beta)+\delta(\beta,\gamma)+\delta(\gamma,\alpha)=$$
$$ =\mathbf{m^{A:0}_{B:d}}+m^{A:d}_{B:0}\mathbf{-m^{A:1}_{B:d-1}}-m^{A:d-1}_{B:1}+\mathbf{m^{B:0}_{C:d}}+m^{B:d}_{C:0}\mathbf{-m^{B:1}_{C:d-1}}-m^{B:d-1}_{C:1}+$$
$$+\mathbf{m^{C:0}_{A:d}}+m^{C:d}_{A:0}\mathbf{-m^{C:1}_{A:d-1}}-m^{C:d-1}_{A:1}.$$
If we take only the odd summands (emphasized by the bold font), we get a ``mixed" formula:
$$ m^{A:0}_{B:d}+m^{B:0}_{C:d}+m^{C:0}_{A:d}-m^{A:1}_{B:d-1}-m^{B:1}_{C:d-1}-m^{C:1}_{A:d-1},$$
which satisfies the invariance conditions in all three components (i.e., well defines function, which may has geometric meaning). This example had motivated this section, where we shall investigate distance formulas for more than two vertices.

The following lemma is for the section crucial. Later we will see (through its applications), that it has triangle inequality-like meaning.
\begin{lemma}\label{ineq}
Let $\alpha=A_v$, $\beta=B_v$ and $\gamma=C_v$ be any three vertices. 
Following inequality is representatives-free and holds:
\begin{equation}\label{triangle}
m_{A:0}^{B:d}+m_{A:i}^{C:d-i}-m_{A:i}^{B:d-i}-m_{B:i}^{C:d-i}\geq 0.
\end{equation}
\end{lemma}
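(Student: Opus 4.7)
The plan is to (a) verify that the expression on the left of~\eqref{triangle} is invariant under the ambiguity $A\sim_v A'$, $B\sim_v B'$, $C\sim_v C'$ (and under change of basis), then (b) exploit this freedom to reduce to $B=I$ and $A$ in localized form, and finally (c) prove the resulting inequality by expanding a single determinant via multilinearity in the $A$-rows. The invariance check is analogous to the proof of Proposition~\ref{invarianceCon}: for instance, replacing $B$ by $p^kXB$ with $X\in\GL_d(\mathbb{Z}_p)$ shifts $m_{A:0}^{B:d}=|B|_p$ by $kd$, $m_{A:i}^{B:d-i}$ by $k(d-i)$, and $m_{B:i}^{C:d-i}$ by $ki$, for a net change of $kd-k(d-i)-ki=0$. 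Analogous checks for $A$ and $C$, together with the observation that a change of basis by $T$ adds $|T|_p$ to each of the four terms (which sum to zero), settle the invariance.

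Using this freedom, I take a basis $\mathcal{F}$ with $\beta=I_{v:\mathcal{F}}$, set $B=I$, and by Lemma~\ref{localizedRepresentative} replace $A$ by a matrix in localized form with pivots $p^{\alpha_1},\ldots,p^{\alpha_d}$ in the non-decreasing order guaranteed by condition~(i) of Definition~\ref{localForm}. Then $|B|_p=0$ and by Lemma~\ref{eq2.2} one has $m_{A:i}^{B:d-i}=\alpha_1+\cdots+\alpha_i$, so the claim reduces to
\[
m_{A:i,C:d-i}\;\geq\;(\alpha_1+\cdots+\alpha_i)+m_{I:i,C:d-i}.
\]

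For the core step, pick any $(A{:}i,C{:}d{-}i)$-candidate $X$ whose $A$-rows are $A_{a_1,*},\ldots,A_{a_i,*}$ with $a_1<\cdots<a_i$. Writing $A_{a_k,*}=\sum_{j=1}^{d}A_{a_k,j}\,e_j$ and using multilinearity of the determinant in these $i$ rows,
\[
\det(X)=\sum_{j_1,\ldots,j_i=1}^{d}\Big(\prod_{k=1}^{i}A_{a_k,j_k}\Big)\det(Y_{j_1,\ldots,j_i}),
\]
where $Y_{j_1,\ldots,j_i}$ is obtained from $X$ by substituting the standard basis row $e_{j_k}$ for $A_{a_k,*}$; this is an $(I{:}i,C{:}d{-}i)$-candidate when the $j_k$ are distinct and has vanishing determinant otherwise. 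By the ultrametric inequality,
\[
\nu(\det X)\;\geq\;\min_{j_1,\ldots,j_i}\Big(\sum_{k=1}^{i}\nu(A_{a_k,j_k})+\nu(\det Y_{j_1,\ldots,j_i})\Big).
\]
The second summand is $\geq m_{I:i,C:d-i}$ by definition, while condition~(i) of localized form gives $\nu(A_{a_k,j_k})\geq\alpha_{a_k}\geq\alpha_k$ (the second inequality because $a_k\geq k$ and the $\alpha$'s are non-decreasing), so the first summand is $\geq\alpha_1+\cdots+\alpha_i$. Taking $X$ to be an $(A{:}i,C{:}d{-}i)$-witness yields the desired bound.

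The conceptual heart of the argument is the multilinear expansion, which cleanly packages any $(A{:}i,C{:}d{-}i)$-candidate as a $\mathbb{Z}_p$-weighted sum of $(I{:}i,C{:}d{-}i)$-candidates, the weights encoding exactly the $A$-vs-$I$ contribution. After this, the localized form of $A$ supplies precisely the lower bound on the minimum weight that one needs, and the ultrametric inequality finishes the proof; I expect no real obstacle beyond carefully executing the invariance bookkeeping in step (a).
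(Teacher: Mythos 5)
Your proof is correct, and it follows the same overall strategy as the paper (invariance reduction to $B=I$ with $A$ in localized form, then an ultrametric bound on a determinant expansion of an $(A{:}i,C{:}d{-}i)$-witness), but the execution of the core estimate differs in a useful way. The paper puts \emph{both} $A$ and $C$ into localized form, expands the witness by the full Leibniz formula, and bounds each monomial by $\sum_{\iota\in\mathcal{I}}\alpha_\iota+\sum_{\iota\in\mathcal{J}}\gamma_\iota$, invoking Lemma~\ref{eq2.2} and the monotonicity of the pivot valuations twice (once for $A$, once for $C$) to identify the two terms $m_{A:i}^{B:d-i}$ and $m_{B:i}^{C:d-i}$. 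You instead localize only $A$ and expand the determinant multilinearly in the $A$-rows over the standard basis, so the complementary minors $\det(Y_{j_1,\ldots,j_i})$ are themselves $(I{:}i,C{:}d{-}i)$-candidates and are bounded by $m_{I:i,C:d-i}$ directly from the definition, with no need to normalize $C$ or to order its pivots; the localized form of $A$ (via $\nu(A_{a_k,j_k})\geq\alpha_{a_k}\geq\alpha_k$, using $a_k\geq k$ and the monotonicity of the $\alpha$'s) supplies exactly the weight bound $\alpha_1+\cdots+\alpha_i$, just as in the paper. The two arguments are reorganizations of the same computation, but yours is slightly leaner: it uses Lemma~\ref{localizedRepresentative} and Lemma~\ref{eq2.2} only for $A$, and it packages the $C$-contribution abstractly, which makes the step more robust (e.g.\ it would survive verbatim if one only knew how to localize one of the two matrices). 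The paper's symmetric version has the small advantage that both boundary terms are computed explicitly in terms of pivot valuations, which is the form reused elsewhere in Section~\ref{sec7}. Your invariance bookkeeping in step (a) is also correct and matches the mechanism of Proposition~\ref{invarianceCon}.
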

\begin{proof}
One can directly ensure, that the expression in equation~\eqref{triangle} satisfies the invariance conditions so it is representatives-free. Hence it is enough to proof the inequality for any special choice of the basis and matrix representatives. Chose the basis so that $\beta=I_v$. Further, by Lemma~\ref{localizedRepresentative} we can assume $A$ and $C$ are in localized form.

Let $\pi_A$ ($\pi_C$, resp.) be the permutation from Definition~\ref{localForm} for $A$ ($C$, resp.) and $\alpha_i=\nu(A_{i,\pi_A(i)})$ ($\gamma_i=\nu(C_{i,\pi_C(i)})$, resp.). Let $X$ be $\text{\footnotesize{(A:i,C:d-i)}}$-\textit{witness} (of $m_{A:i}^{C:d-i}$), composed from rows $\{A_{\iota,*}|\iota \in \mathcal{I}\}$ where $|\mathcal{I}|=i$ and $\{C_{\iota,*}|\iota \in \mathcal{J}\}$ where $|\mathcal{J}|=d-i$. Then:
\begin{equation}\label{eq:det}
\det(X)=\sum_{\pi\in S_d}sgn(\pi)X_{1, \pi(1)}
\cdot\cdots\cdot
 X_{d,\pi(d)}.
\end{equation}
Since the valuation of every element in row $A_{\iota,*}$ ($C_{\iota,*}$, resp.) is at least $\alpha_{\iota}$ ($\gamma_{\iota}$, resp.), there is a lower bound for summands in~(\ref{eq:det}):
\begin{equation}\label{eq05:minors}
\begin{aligned}
\nu(X_{1,\pi(1)}\cdot\cdots\cdot X_{d,\pi(d)})&=\sum_{k=1}^d\nu(X_{k,\pi(k)})\geq\\
\geq\sum_{\iota\in \mathcal{I}}\alpha_{\iota}+\sum_{\iota\in \mathcal{J}}\gamma_{\iota}&\geq \sum_{\iota=1}^{i}\alpha_{\iota}+\sum_{\iota= 1}^{d-i}\gamma_{\iota}=m_{A:i}^{B:d-i}+m_{B:i}^{C:d-i}.
\end{aligned}
\end{equation}
The second inequality in~\eqref{eq05:minors} holds because the $\alpha_i$'s respective $\gamma_i$'s are ordered by size and the last equality follows from Lemma~\ref{eq2.2} (recall $B=I$). 

Now, using triangle inequality for valuation and~\eqref{eq05:minors} we obtain 
$$m_{A:i}^{C:d-i}=|X|_p\geq m_{A:i}^{B:d-i}+m_{B:i}^{C:d-i}.$$ 
It remains to add $m_{A:0}^{B:d}=|I|_p=0$ to the left hand side and we are done.
\end{proof}
Previous lemma could be used to prove triangular inequality for formulas of distances (equations~\eqref{eqDistance} and~\eqref{elDis}) or for any $\delta^i(\alpha,\beta)= m_{0}^{d}+m_{d}^{0}-m_{i}^{d-i}-m_{d-i}^{i}$ where $i=0,\ldots,d$. This is just special case ($n=2$, $\pi=(1,2)$) of the following lemma:

\begin{prop}\label{generalLowerBound}
For each pair of vertices $A_v,B_v$ and integer $i=0,\ldots,d$ define $\delta^i(A_v,B_v):= m_{0}^{d}+m_{d}^{0}-m_{i}^{d-i}-m_{d-i}^{i}$. Then for each $n$-tuple of vertices $\bar{\alpha}=(\alpha_1,\ldots,\alpha_n)$, where $\alpha_s=(A_s)_v$, and any permutation $\pi\in S_n$ and a vertex $\beta=B_v$ we have
\begin{equation}\label{eqGeneralLowerBound}
\sum_s\delta^i(\alpha_s,\beta)\geq \sum_{s}\bigg(m^{A_{\pi(s)}:d}_{A_s:0}-m^{A_{\pi(s)}:i}_{A_s:d-i}\bigg).
\end{equation}
\end{prop}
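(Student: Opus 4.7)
The plan is to derive~\eqref{eqGeneralLowerBound} by applying the triangle-inequality-like bound of Lemma~\ref{ineq} once for each index $s$ and then summing over $s$, exploiting that $\pi$ is a bijection to collapse the $\pi$-shifted terms into $\pi$-free sums that match the left-hand side.

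First, I would feed Lemma~\ref{ineq} the substitution $(A,B,C)\mapsto(A_s,B,A_{\pi(s)})$, but with index $d-i$ in place of the lemma's $i$. After a trivial rearrangement this yields
\begin{equation*}
m_{A_s:d-i}^{A_{\pi(s)}:i}\;\geq\; m_{A_s:d-i}^{B:i}+m_{B:d-i}^{A_{\pi(s)}:i}-m_{A_s:0}^{B:d}.
\end{equation*}
Next, I would sum these $n$ inequalities over $s$. Using the built-in symmetry $m_{X:a}^{Y:b}=m_{Y:b}^{X:a}$ to rewrite $m_{B:d-i}^{A_{\pi(s)}:i}=m_{A_{\pi(s)}:i}^{B:d-i}$, and then re-indexing via the bijection $\pi$, one obtains
\begin{equation*}
\sum_s m_{A_{\pi(s)}:i}^{B:d-i}=\sum_s m_{A_s:i}^{B:d-i},\qquad \sum_s m_{A_s:0}^{A_{\pi(s)}:d}=\sum_s |A_{\pi(s)}|_p=\sum_s m_{A_s:d}^{B:0}.
\end{equation*}
Substituting these identities into the summed inequality and collecting terms should yield exactly $\sum_s\delta^i(\alpha_s,\beta)\geq \sum_s\bigl(m_{A_s:0}^{A_{\pi(s)}:d}-m_{A_s:d-i}^{A_{\pi(s)}:i}\bigr)$, which is~\eqref{eqGeneralLowerBound}.

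The hard part, such as it is, is spotting the correct index to plug into Lemma~\ref{ineq}: feeding it index $d-i$ rather than $i$ is what makes the emitted $B$-terms pair up with the four negative summands in $\delta^i(\alpha_s,\beta)$ after the $\pi$-reindexing. Once that choice is made, the proof reduces to $n$ independent applications of Lemma~\ref{ineq} stitched together by a permutation reindexing, and no further genuine content is needed; I expect the write-up to consist of essentially the display above plus two or three lines of bookkeeping.
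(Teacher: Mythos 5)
Your proposal is correct and is essentially the paper's own argument: the paper likewise applies Lemma~\ref{ineq} to the triple $(A_s,B,A_{\pi(s)})$ with index $d-i$ for each $s$, sums, and uses the bijectivity of $\pi$ together with $m_{A_s:d}^{B:0}=|A_s|_p=m_{A_s:d}^{A_{\pi(s)}:0}$ to collect the remaining terms. The only difference is cosmetic (you start from the lemma and sum up, the paper starts from $\sum_s\delta^i(\alpha_s,\beta)$ and estimates downward), so no further comment is needed.
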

\begin{proof}
\begin{align*}
\sum_{s}\delta^i(\alpha_s,\beta)&=\sum_{s}\bigg(m^{B:d}_{A_s:0}+m^{B:0}_{A_s:d}-m^{B:d-i}_{A_s:i}-m^{B:i}_{A_s:d-i}\bigg)=\\
&=\sum_{s}m^{B:0}_{A_s:d}+\sum_{s}\bigg(m^{B:d}_{A_s:0}-m^{B:d-i}_{A_{\pi(s)}:i}-m^{B:i}_{A_{s}:d-i}\bigg)\geq\\
&\geq\sum_{s}m^{B:0}_{A_s:d}+\sum_{s}-m_{A_{s}:d-i}^{A_{\pi(s)}:i}
=\sum_{s}\bigg(m^{A_{\pi(s)}:0}_{A_s:d}-m_{A_{s}:d-1}^{A_{\pi(s)}:i}\bigg).
\end{align*}
The inequality follows by applying inequality~\eqref{ineq} to all summands in the second summation. At the last step we used $m^{B:0}_{A_s:d}=|A_s|_p=m^{A_{\pi(s)}:0}_{A_s:d}$.

Notice that during the calculation we preserved the invariance conditions of first formula, hence the last formula satisfies invariance conditions as well.
\end{proof}

An essential knowledge in the proof of Theorem~\ref{generalDistance} is the well-known Hall's condition for existence of perfect matching (Theorem 2.1.2 in~\cite{Ha}).
\begin{thm}[Hall's condition]
Let $X=\{x_1,\ldots,x_m\}$ be any set and $\triangleright$ be binary relation on $X$. Then there is a permutation $\pi\in S_m$ such that $x_i\triangleright x_{\pi(i)}$ for each $i=1,\ldots,d$, if and only if for each $Y\subseteq X$ 
$$|\{x\in X|x\triangleright y, \text{\ for\ some\ }y\in Y\}|\geq |Y|.$$
\end{thm}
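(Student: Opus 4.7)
The plan is to proceed by induction on $m = |X|$. Necessity I would dispatch immediately: a witnessing permutation $\pi$ produces the injection $x_j \mapsto x_{\pi^{-1}(j)}$ from $Y$ into $N(Y) := \{x \in X : x \triangleright y \text{ for some } y \in Y\}$, giving $|N(Y)| \geq |Y|$. The base case $m = 1$ reduces to applying Hall's condition to $Y = X$. For the inductive step I would split into two cases depending on whether Hall's inequality is strict on every proper nonempty subset.

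\textbf{Case A (strict surplus).} Suppose $|N(Y)| \geq |Y| + 1$ for every $\emptyset \neq Y \subsetneq X$. I would pick any $y_0 \in X$, use Hall on $\{y_0\}$ to select some $x_0$ with $x_0 \triangleright y_0$, pair them, and pass to the sub-problem on $(X \setminus \{x_0\}) \times (X \setminus \{y_0\})$. For any $Y' \subseteq X \setminus \{y_0\}$ the sub-problem's neighborhood is $N(Y') \setminus \{x_0\}$, so the surplus hypothesis yields $|N(Y')| - 1 \geq |Y'|$. After identifying the two sides by any bijection, the induction hypothesis closes this case.

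\textbf{Case B (tight subset).} Suppose some $\emptyset \neq Y_0 \subsetneq X$ satisfies $|N(Y_0)| = |Y_0|$. I would split the matching problem in two: match $N(Y_0)$ with $Y_0$, and match $X \setminus N(Y_0)$ with $X \setminus Y_0$. The first sub-problem inherits Hall's condition for free since for $Y \subseteq Y_0$ one has $N(Y) \subseteq N(Y_0)$, so its restricted neighborhood coincides with the original $N(Y)$. For the second sub-problem, applied to $Y' \subseteq X \setminus Y_0$, I would invoke Hall on $Y_0 \cup Y'$ in the original relation together with the identity $N(Y_0 \cup Y') = N(Y_0) \cup N(Y')$ and the tightness $|N(Y_0)| = |Y_0|$, extracting $|N(Y') \setminus N(Y_0)| \geq |Y'|$. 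Both sub-problems have strictly smaller underlying sets, so the induction hypothesis applies, and concatenating the two matchings (after the appropriate bijective side-identifications) yields the desired permutation of $X$.

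The only mildly delicate point is the transfer of Hall's condition to the quotient sub-problem in Case B, which rests on the set-theoretic identity $N(Y_0 \cup Y') = N(Y_0) \cup N(Y')$; this is immediate from the definition of $N$. Everything else is routine bookkeeping about relabelings between the equipotent left and right sides.
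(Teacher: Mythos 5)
Your proposal is correct, but there is nothing in the paper to compare it against: the paper only quotes Hall's condition as a known result (citing Theorem 2.1.2 of the reference denoted [Ha]) and uses it as a black box in the proof of Theorem~\ref{generalDistance}; no proof is given there. What you have written is the classical Halmos--Vaughan induction for Hall's marriage theorem, transplanted to the ``relation on a single set, seek a permutation'' formulation used here. All the steps check out: the necessity direction via the injection $x_j\mapsto x_{\pi^{-1}(j)}$ into $N(Y)$ is fine; in Case A, any nonempty $Y'\subseteq X\setminus\{y_0\}$ is a proper nonempty subset of $X$, so the surplus hypothesis indeed gives $|N(Y')\setminus\{x_0\}|\geq |Y'|$; and in Case B the key inequality $|N(Y')\setminus N(Y_0)|\geq |Y'|$ follows, as you say, from $N(Y_0\cup Y')=N(Y_0)\cup N(Y')$ together with tightness of $Y_0$, with both sub-problems strictly smaller. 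The point you flag as delicate (re-encoding each bipartite sub-problem as a relation on one set via a bijection between the equipotent sides, so that the induction hypothesis in the stated one-set form applies) is handled correctly and is the only place where your formulation differs from the usual bipartite statement. One cosmetic remark: the statement itself has a typo ($i=1,\ldots,d$ should read $i=1,\ldots,m$), which your proof silently and correctly repairs.
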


\begin{thm}\label{generalDistance}
Let all vertices of $n$-tuple $\bar{\alpha}=(\alpha^1,\ldots,\alpha^n)$ have common apartment $\mathscr{A}$ and $\beta$ be some vertex laying in $\mathscr{A}$ minimalizing
\begin{equation}
\delta_{\bar{\alpha}}(-):=\sum_i\delta(-,\alpha_i)
\end{equation}
among all the vertices of the apartment $\mathscr{A}$. Then there is a permutation $\pi\in S_n$ such that in inequality~\eqref{eqGeneralLowerBound} (for $i=1$) equality holds. Moreover any local minimum of $\delta_{\bar{\alpha}}$ in $\mathscr{A}$ is also a global minimum (meaning on $\mathscr{A}$).
\end{thm}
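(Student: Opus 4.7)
The plan is to reduce to diagonal representatives via a splitting basis of~$\mathscr{A}$, recognise the equality condition in~\eqref{eqGeneralLowerBound} (for $i=1$) as the existence of a perfect matching in an auxiliary bipartite graph built from the argmax and argmin sets of the coordinates $x_{s,k}=a_{s,k}-b_k$, and then verify Hall's condition from the local minimality of~$\beta$. The \emph{moreover} statement drops out as a corollary, so I argue for an arbitrary local minimum $\beta$ throughout.

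Pick a splitting basis of $\mathscr{A}$ so that all $\alpha_s=(A_s)_v$ and $\beta=B_v$ have diagonal representatives; write $a_{s,k}=\nu(A_{s;k,k})$, $b_k=\nu(B_{k,k})$, $x_{s,k}=a_{s,k}-b_k$, $f:=\delta_{\bar\alpha}$, together with $M_s=\max_k x_{s,k}$, $m_s=\min_k x_{s,k}$ and
\[
I_s=\{k:x_{s,k}=M_s\},\qquad J_s=\{k:x_{s,k}=m_s\}.
\]
Proposition~\ref{propDistanceLoc} yields $\delta(\alpha_s,\beta)=M_s-m_s$, and applied to the pair $(A_{\pi(s)},A_s)$ it also gives
\[
m_{A_s:d}^{A_{\pi(s)}:0}-m_{A_s:d-1}^{A_{\pi(s)}:1}=\max_k(a_{s,k}-a_{\pi(s),k})=\max_k(x_{s,k}-x_{\pi(s),k}).
\]
The trivial bound $\max_k(x_{s,k}-x_{\pi(s),k})\le M_s-m_{\pi(s)}$ summed over $s$ gives
\[
\sum_s\max_k(x_{s,k}-x_{\pi(s),k})\le\sum_s M_s-\sum_s m_{\pi(s)}=\sum_s(M_s-m_s)=f(\beta),
\]
with equality precisely when $I_s\cap J_{\pi(s)}\neq\emptyset$ for every $s$. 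Thus the first assertion reduces to finding $\pi\in S_n$ with $I_s\cap J_{\pi(s)}\neq\emptyset$ for all $s$, i.e.\ a perfect matching in the bipartite graph $H$ on $[n]\sqcup[n]$ with edges $s\sim t\iff I_s\cap J_t\neq\emptyset$.

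Hall's condition for $H$ is verified from local minimality. By Observation~\ref{obs2} the edge-neighbours of $b$ in $\mathscr{A}$ are precisely the vertices $b+\chi_S$ for $\emptyset\neq S\subsetneq[d]$, with $\chi_S\in\{0,1\}^d$ the characteristic vector of $S$. A case analysis of how $M_s$ and $m_s$ change under $b\rightsquigarrow b+\chi_S$ yields
\[
f(b+\chi_S)-f(b)=C(S)-D(S),
\]
where $D(S):=\#\{s:I_s\subseteq S,\,J_s\cap S=\emptyset\}$ and $C(S):=\#\{s:I_s\not\subseteq S,\,J_s\cap S\neq\emptyset\}$; so local minimality is equivalent to $D(S)\le C(S)$ for every proper non-empty $S$. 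Given $U\subseteq[n]$, set $S:=\bigcup_{s\in U}I_s$ (the extreme cases $S=\emptyset$ or $S=[d]$ make Hall trivial). Then $N_H(U)=\{t:J_t\cap S\neq\emptyset\}$, and partitioning $U=U'\sqcup U''$ by whether $J_s\cap S=\emptyset$ or not, each $s\in U$ has $I_s\subseteq S$ by construction so $U'\subseteq D(S)$; the sets $C(S)$ and $U''$ are disjoint ($U''\subseteq\{s:I_s\subseteq S\}$ while $C(S)\subseteq\{s:I_s\not\subseteq S\}$) and both lie in $N_H(U)$. The chain
\[
|U|=|U'|+|U''|\le|D(S)|+|U''|\le|C(S)|+|U''|\le|N_H(U)|
\]
verifies Hall's condition, with local minimality used in the middle inequality.

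The moreover statement is immediate: the right-hand side of~\eqref{eqGeneralLowerBound} depends on $\pi$ but not on $\beta$, so if $\beta$ is any local minimum the argument above produces $\pi$ with
\[
f(\beta)=\sum_s\bigl(m_{A_s:d}^{A_{\pi(s)}:0}-m_{A_s:d-1}^{A_{\pi(s)}:1}\bigr)\le\min_{\beta'\in\mathscr{A}}f(\beta'),
\]
the last inequality being~\eqref{eqGeneralLowerBound} applied at a global minimiser. Hence $f(\beta)$ is already the global minimum on~$\mathscr{A}$. The main obstacle, I expect, is spotting the right equality criterion $I_s\cap J_{\pi(s)}\neq\emptyset$ together with the correct choice $S=\bigcup_{s\in U}I_s$ in the Hall verification; once these are in place, Hall and bookkeeping finish the proof.
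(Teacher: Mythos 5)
Your proposal is correct and follows essentially the same route as the paper's own proof: diagonalize in a splitting basis of $\mathscr{A}$, recognise equality in~\eqref{eqGeneralLowerBound} (for $i=1$) as the condition $I_s\cap J_{\pi(s)}\neq\emptyset$ (the paper's relation $\vartriangleright$ via $I^s_{max}\cap I^r_{min}\neq\emptyset$), derive the counting inequality $D(S)\le C(S)$ (the paper's~\eqref{eqThisMayLastEq}) from local minimality over the neighbours $b+\chi_S$, apply Hall's theorem, and deduce the \emph{moreover} part from the $\beta$-independence of the right-hand side. The only deviations are cosmetic (no normalisation $\beta=I_v$, bipartite-graph phrasing of Hall, and $|A_s|_p$ versus $|A_{\pi(s)}|_p$ in the right-hand side, which agree after summing over the permutation), so there is nothing to fix.
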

\begin{proof}
We can assume $\beta=I_v$ and every $\alpha^r$ has diagonal representative $A^r$ (the index is placed up in order to avoid double lower index), having entries in $\mathbb{Z}_p$ and at least one of zero valuation. Hence $A^r=\mathrm{diag}(A^r_{1,1},\ldots,A^r_{d,d})$ and for $\alpha^r_i:=\nu(A^r_{i,i})$ we have $\min_{1\leq i\leq d}\{\alpha_i^r\}=0$. For every $r=1,\ldots,n$ define 
$I_{max}^{r},I_{min}^{r}\subseteq \{1,\ldots,d\}$ such that: 
$$j\in I_{max}^{r}\Leftrightarrow \alpha_{j}^{r} = \max_{i}\{\alpha_{i}^{r}\}\text{\ \ and\ \ }j\in I_{min}^{r}\Leftrightarrow \alpha_{j}^{r} = 0=\min_{i}\{\alpha_{i}^{r}\}.$$
Notice that by the formulas from Proposition~\ref{propDistanceLoc} we have
\begin{equation}
\delta(\alpha^r,\beta)=\delta(\alpha^r,I_v)=\alpha^r_j, \text{\ for\ each\ }j\in I^r_{max}.
\end{equation}

Next define a binary relation $\vartriangleright$ on $A^r$'s by prescription
\begin{equation}\label{definitionOfRelation}
A^r \vartriangleright A^s \overset{def}{\Leftrightarrow} m_{A^s:d}^{A^r:0}-m_{A^s:d-1}^{A^r:1}=\delta(\beta,\alpha^s)\Leftrightarrow  I^{s}_{max}\bigcap I^{r}_{min}\neq \emptyset.
\end{equation} 
The second equivalence follows by applying Proposition~\ref{propDistanceLoc}:
$$\delta(\beta,\alpha^s)=\max\{\alpha_j^s\} \text{\ \ and\ \ }
m_{A^s:d}^{A^r:0}-m_{A^s:d-1}^{A^r:1}=\max_j\{\alpha_{j}^s-\alpha_{j}^{r}\}.$$
Hence the equality holds if and only if there is $j$ such that $\alpha_{j}^{s}=\max_{j}\{\alpha_{j}^{s}\}$ 
and $\alpha_{j}^{r}=0$, equivalently $j\in I^{s}_{max}\bigcap I^{r}_{min}\neq \emptyset$. 

Let us parametrize all vertices in neighbourhood of $\beta=I_v$ in the apartment by $\{\beta^{I}|I\subseteq\{1,\ldots,d\}\}$, where $\beta^I$ has diagonal matrix representative $B^I$ having only $1$'s and $p$'s on diagonal, as follows:
$$B^I=\mathrm{diag}(p^{\beta^{I}_{1}},\ldots,p^{\beta^{I}_{d}}) \text{\ \ where\ \ }
\beta^I_j=\begin{cases} 
      \hfill 1 \hfill & \text{\ if\ } j\in I,\\
      \hfill 0 \hfill & \text{\ if\ } j\not\in I.\\
  \end{cases}
$$
Of course $\beta^{\emptyset}=\beta^{\{1,\ldots,d\}}=\beta$.

Let us see what is the value of $\delta_{\bar{\alpha}}$ on $\beta^I$'s. Using formula~\eqref{vyznamVzdalenosti} we get
$$\delta_{\bar{\alpha}}(\beta^I)=\sum_{r}\max_{j}\{\alpha_{j}^{r}-\beta^{I}_{j}\}+\sum_{r}\max_{j}\{\beta^{I}_{j}-\alpha_{j}^{r}\}.$$
When we change $I=\emptyset$ to $I\neq \emptyset$, every $\max_{j}\{\alpha_{j}^{r}-\beta^{I}_{j}\}$ stays the same or decrease by $-1$ if $I^r_{max} \subseteq I$ and every $\max_{j}\{\beta^{I}_{j}-\alpha_{j}^{r}\}$ stays the same or increase by $+1$ if $I^r_{min}\cap I \neq \emptyset$. Because of the minimality of $\delta_{\bar{\alpha}}(\beta)$, for every $I$ we have
\begin{equation}\label{eqThisMayLastEq}
|\{r|I^r_{max}\subseteq I\}| \leq |\{r|I^r_{min}\cap I\not =\emptyset\}|.
\end{equation}
Now choose any $S\subseteq\{1,\ldots,n\}$ and define $I=\bigcup_{s\in S}I^{s}_{max}$. According to~\eqref{eqThisMayLastEq} there has to be at least $|S|$ indices $r$'s such that $I_{min}^{r}$ satisfies $I_{min}^{r}\cap I\neq \emptyset$. According to~\eqref{definitionOfRelation} that means there are at least $|S|$ matrices $A^r$'s being in relation $A^r \vartriangleright \{A^s|s\in S\}$. That is exactly the Hall's condition for the existence of permutation $\pi$ satisfying $A^r \vartriangleright A^{\pi(r)}$ for all $r$'s. From~\eqref{definitionOfRelation} using $\pi$ we achieve the equality in~\eqref{eqGeneralLowerBound}.

Since we have used only local minimality of $\delta_{\bar{\alpha}}(-)$ at $\beta$ and we already get on the button of the lower bound~\eqref{eqGeneralLowerBound}, at $\beta$ the global minimum occurs as well. 
\end{proof}
Putting $i=1$, a geometric interpretation of the LH'S of~\eqref{eqGeneralLowerBound} is clear. We can also provide a geometric interpretation of the RH'S.
Recall Example~\ref{exDirectedGraph} where we were investigated the distance formula in the restricted directed graph $\mathscr{B}[1]$. In $\mathscr{B}[1]$ let us consider any collection of cycles, which union pass through all vertices $\alpha_1,\ldots,\alpha_n$ and hence naturally defines permutation $\pi\in S_n$. If it connects each pair of consecutive vertices $\alpha_i,\alpha_{\pi(i)}$ by a minimal path, the length of the cycle equals according to~\eqref{directedDistance} (where actually equality holds)
$$ \sum_s\overrightarrow{\delta}(\alpha_s,\alpha_{\pi(s)})=d\cdot\sum_s \bigg(|A_s|-m_{A_s:1}^{A_{\pi(s)}:d-1}\bigg).$$
Because of this geometric interpretation we realize, that we can always find a permutation $\pi$ maximalizing RH'S of~\eqref{eqGeneralLowerBound} for $i=1$ without fixed points. Since by incorporating a fixed point of $\pi$ into any cycle of $\pi$, the RH'S for the resulting permutation can't decrease by the triangular inequality for $\overrightarrow{\delta}$. 
Specially, in the case $n=3$, only cycles matter:
\begin{cor}
The length of a minimal tree connecting three vertices $A_v$, $B_v$ and $C_v$ is greater or equal to $\max\{\Lambda_1,\Lambda_2\}$ where
$$ \Lambda_1=m^{A:0}_{B:d}+m^{B:0}_{C:d}+m^{C:0}_{A:d}-m^{A:1}_{B:d-1}-m^{B:1}_{C:d-1}-m^{C:1}_{A:d-1},$$
$$\Lambda_2= m^{A:0}_{C:d}+m^{C:0}_{B:d}+m^{B:0}_{C:d}-m^{A:1}_{C:d-1}-m^{C:1}_{B:d-1}-m^{B:1}_{A:d-1}.$$
\end{cor}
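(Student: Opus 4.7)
The plan is to reduce the statement to Proposition~\ref{generalLowerBound} applied with $n=3$ and $i=1$. The crucial preliminary observation is that the length of a minimal Steiner tree in $\mathscr{B}^1$ connecting the three terminals $\alpha_1=A_v$, $\alpha_2=B_v$, $\alpha_3=C_v$ equals $\min_{\beta\in\mathscr{B}^0} \sum_{s=1}^{3} \delta(\alpha_s,\beta)$. Indeed, every optimal Steiner tree on three terminals contains a vertex $\beta$ (either the unique degree-three branching point, or a terminal that lies on a path between the other two) such that the tree decomposes as three shortest paths from $\beta$ to the $\alpha_s$'s, contributing total edge-count $\sum_s \delta(\alpha_s,\beta)$; the minimum over $\beta$ gives the Steiner length.

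Next, since $\delta^1(\alpha,\beta)=m_0^d+m_d^0-m_1^{d-1}-m_{d-1}^1$ coincides with the graph distance $\delta(\alpha,\beta)$ by Theorem~\ref{distance}, Proposition~\ref{generalLowerBound} yields, for every $\pi\in S_3$ and every vertex $\beta$,
\[
\sum_{s=1}^{3}\delta(\alpha_s,\beta)\;\geq\;\sum_{s=1}^{3}\Bigl(m^{A_{\pi(s)}:d}_{A_s:0}-m^{A_{\pi(s)}:1}_{A_s:d-1}\Bigr).
\]
Because the right-hand side is independent of $\beta$, the same inequality holds with the left-hand side replaced by its minimum over $\beta$, that is, by the Steiner-tree length of $\{\alpha_1,\alpha_2,\alpha_3\}$.

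It remains to check that specialising $\pi$ to the two $3$-cycles of $S_3$ recovers exactly $\Lambda_1$ and $\Lambda_2$. Since $m^{A_{\pi(s)}:d}_{A_s:0}=|A_{\pi(s)}|_p$, the positive contributions reassemble into $|A|_p+|B|_p+|C|_p$ irrespective of $\pi$; only the three negative terms depend on the cycle, and they are the cyclic pairings $(A_s,A_{\pi(s)})$ contributing objects of shape $m^{X:1}_{Y:d-1}$. A direct substitution then matches the cycle $(1\,3\,2)$ with $\Lambda_1$ and the cycle $(1\,2\,3)$ with $\Lambda_2$. As the discussion preceding the corollary already observes, no permutation with a fixed point can improve the bound, because a fixed point can be absorbed into a cycle using the triangular inequality for $\overrightarrow{\delta}$ from Example~\ref{exDirectedGraph} without decreasing the right-hand side. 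Taking the maximum of the bounds produced by the two $3$-cycles yields $\max\{\Lambda_1,\Lambda_2\}$ and completes the proof.

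The only real obstacle is the clerical step of pairing the two $3$-cycles with $\Lambda_1$ and $\Lambda_2$ under the prevailing notational conventions for $m^{X:i}_{Y:d-i}$; all the conceptual work is already carried out in Proposition~\ref{generalLowerBound} and in the Steiner-point reformulation of the minimal tree length.
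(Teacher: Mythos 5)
Your proposal is correct and follows essentially the same route the paper intends: the corollary is exactly Proposition~\ref{generalLowerBound} with $n=3$, $i=1$, specialised to the two $3$-cycles, combined with the (implicit in the paper, explicit in your write-up) observation that the minimal tree length equals $\min_{\beta}\sum_s\delta(\alpha_s,\beta)$ via the branch-point decomposition. Your reassembly of the positive terms into $|A|_p+|B|_p+|C|_p$ also silently corrects the obvious typo in the paper's $\Lambda_2$ (whose third term should read $m^{B:0}_{A:d}$), so nothing further is needed.
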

\begin{rem}
Denote by $\Lambda$ the length of a minimal tree connecting some vertices $A_v$, $B_v$ and $C_v$. A trivial lower bound for $\Lambda$ is $\frac{1}{2}[\delta(A_v,B_v)+\delta(B_v,C_v)+\delta(C_v,A_v,)]$, that follows (for any graph) directly from triangular inequality. See that $\delta(A_v,B_v)+\delta(B_v,C_v)+\delta(C_v,A_v,)=\Lambda_1+\Lambda_2$ (recall the introduction to this section). Hence the lower bound $\Lambda\geq\max\{\Lambda_1,\Lambda_2\}$ is stronger than the trivial one $\Lambda\geq\frac{\Lambda_1+\Lambda_2}{2}$.
\end{rem}
\section{Appendix}
Let us try compute intersection of two apartments in the case $d=2$, using the formula of distance (Theorem~\ref{distance}). Given an apartment $$\mathscr{A}=\left( \begin{array}{ccc}   
    c & d\\
    e & f\\
    \end{array} \right)_a,$$ 
where $c,d,e,f\in \mathbb{Q}_p$, what is the intersection of $\mathscr{A}$ and $I_a$? Consider following pair of vertices 
$$\alpha_i=\left( \begin{array}{ccc}   
    p^i & 0\\
    0 & 1\\
    \end{array} \right)_v, \beta_j=\left( \begin{array}{ccc}   
    p^jc & p^jd\\
    e & f\\
    \end{array} \right)_v,$$
for $i,j\in\mathbb{Z}$. We are going to solve equation 
\begin{equation}\label{solvedEqu}
\delta(\alpha_i,\beta_j)=0,
\end{equation} with respect to unknown integers $i,j$. We are interested rather in the inequality $\delta(\alpha_i,\beta_j)\leq 0$, since the other direction holds for each $i,j$. Now apply Theorem~\ref{distance}:
$$2\cdot \min\bigg\{
\left|\begin{array}{ccc}   
    p^jc & p^jd\\
    p^i & 0\\
    \end{array} \right|_p,\left| \begin{array}{ccc}   
    p^jc & p^jd\\
    0 & 1\\
    \end{array} \right|_p,\left| \begin{array}{ccc}   
    e & f\\    
    p^i & 0\\
    \end{array} \right|_p,\left| \begin{array}{ccc}   
    e & f\\
    0 & 1\\
    \end{array} \right|_p
\bigg\} \geq$$
$$\geq\left|\begin{array}{ccc}   
    p^jc & p^jd\\
    e & f\\
    \end{array} \right|_p+
    \left| \begin{array}{ccc}   
    p^i & 0\\
    0 & 1\\
    \end{array} \right|_p.$$
After evaluation of the determinants: 
$$ 2\cdot\min\{i+j+\nu(d),j+\nu(c),i+\nu(f),\nu(e)\}\geq i+j+\nu(cf-de).$$
Which splits into four inequalities:
\begin{align}
i+j +2\cdot\nu(d)\geq\nu(cf-de),\label{1} \\
 j-i +2\cdot\nu(c)\geq \nu(cf-de), \label{2}\\
i-j +2\cdot\nu(f)\geq \nu(cf-de), \label{3}\\
-i-j +2\cdot\nu(e)\geq \nu(cf-de). \label{4}
\end{align}

Summing \eqref{1}+\eqref{4} we get $\nu(de)=\nu(d)+\nu(e)\geq \nu(cf-de)$ and similarly 
\eqref{2}+\eqref{3} leads to $\nu(cf)=\nu(c)+\nu(f)\geq \nu(cf-de)$. Combine these inequalities with triangular inequality and realize
\begin{equation}
\min\{\nu(de),\nu(cf)\}\geq \nu(cf-de)\geq \min\{\nu(de),\nu(cf)\}.
\end{equation}
Hence either holds: there is no pair of integers $i,j$ solving the equality~\eqref{solvedEqu} or 
\begin{equation}\label{asA}
\min\{\nu(cf),\nu(de)\}=\nu(cf-de).
\end{equation}
Suppose the last case holds and $\nu(cf-de)=\nu(cf)$. Substitute this to inequalities~\eqref{2} and~\eqref{3}, after some corrections we get
\begin{align}
 j-i \geq \nu(f)-\nu(c),\\
i-j \geq \nu(c)-\nu(f).
\end{align}
From which we obtain $j-i=\nu(f)-\nu(c)$. Then substitute $j=i+\nu(f)-\nu(c)$ into~\eqref{1} and~\eqref{4}, we get 
\begin{align}
i+i-\nu(c)+\nu(f)+2\cdot\nu(d)\geq\nu(c)+\nu(f),\\
-i-i+\nu(c)-\nu(f) +2\cdot\nu(e)\geq \nu(c)+\nu(f).
\end{align}
These two inequalities lead to the solution of~\eqref{solvedEqu}:
\begin{equation}\label{sol1}
\nu(c)-\nu(d)\leq i\leq \nu(e)-\nu(f), j=i-\nu(c)+\nu(f).
\end{equation} 
Geometrically this means, that the intersection of apartments is empty or one vertex or a path of the length $\nu(e)-\nu(f)-\nu(c)+\nu(d)=\nu(de)-\nu(cf)$ which is non-negative as we have supposed $\min\{\nu(cf),\nu(de)\}=\nu(cf)$. 

If the other case holds: $\nu(cf-de)=\nu(d)+\nu(e)$, we can change the representative of $\mathscr
{A}$ by interject its rows and overwrite the previous solution with $c\leftrightarrow e$ and $d\leftrightarrow f$:
\begin{equation}\label{sol2}
\nu(e)-\nu(f)\leq i\leq \nu(c)-\nu(d), j=i-\nu(e)+\nu(d).
\end{equation}
Now we can put the discussed cases together into one formula for number of edges shared by $\mathscr{A}$ and $I_a$. If $\nu(cf-de)= \min\{\nu(cd),\nu(de)\}$, according to~\eqref{sol1} and~\eqref{sol2} there is non-empty intersection, with
\begin{equation}\label{forInt}
|\nu(e)-\nu(c)+\nu(d)-\nu(f)|
\end{equation}
edges. If $\nu(cf-de)\gneq \min\{\nu(cd),\nu(de)\}$, the apartments have empty intersection. However, this could happen only if $\nu(cf)=\nu(de)$, which means the formula~\eqref{forInt} vanishes.

\begin{thm}
For any pair of apartments in $\mathscr{B}(\mathrm{SL}_2(\mathbb{Q}_p))$
$$\mathscr{A}'=\left( \begin{array}{ccc}   
    c & d\\
    e & f\\
    \end{array} \right)_a,\ 
\mathscr{A}''=\left( \begin{array}{ccc}   
    r & s\\
    t & u\\
    \end{array} \right)_a  
$$
the number of edges they share is equal to
\begin{equation}\label{formulaOfSharedEdges}
\bigg|     
     \left|\begin{array}{ccc}   
    c & d\\
    r & s\\
    \end{array} \right|_p-
     \left|\begin{array}{ccc}      
    c & d\\
    t & u\\
    \end{array} \right|_p+
     \left|\begin{array}{ccc}   
    e & f\\
    t & u\\
    \end{array} \right|_p-
     \left|\begin{array}{ccc}   
    e & f\\
    r & s\\
    \end{array} \right|_p    
    \bigg|.
\end{equation}
\end{thm}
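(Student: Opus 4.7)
The plan is to reduce to the special case $\mathscr{A}'' = I_a$, which is the situation already worked out earlier in this appendix, by a single change of basis. Write $T = \left(\begin{smallmatrix} r & s \\ t & u \end{smallmatrix}\right)$ and $M = \left(\begin{smallmatrix} c & d \\ e & f \end{smallmatrix}\right)$, and let $\mathcal{F}$ be the basis of $\mathbb{Q}_p^2$ whose two vectors are the rows of $T$ read in the canonical basis $\mathcal{E}$. Then $T$ is itself the transition matrix from $\mathcal{F}$ to $\mathcal{E}$, so Lemma~\ref{lemChangeOfBasis} gives $\mathscr{A}'' = T_{a:\mathcal{E}} = I_{a:\mathcal{F}}$ and $\mathscr{A}' = M_{a:\mathcal{E}} = (MT^{-1})_{a:\mathcal{F}}$.

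The formula~\eqref{forInt} derived earlier in the appendix then applies: the number of edges shared by $I_{a:\mathcal{F}}$ and any other apartment $N_{a:\mathcal{F}}$ with $N = \left(\begin{smallmatrix} c' & d' \\ e' & f' \end{smallmatrix}\right)$ equals $|\nu(e') - \nu(c') + \nu(d') - \nu(f')|$. Recall that the case analysis in the appendix showed this value is the correct edge count in both the generic case and the degenerate case where the apartments are disjoint and the formula vanishes automatically.

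It remains to compute $N = MT^{-1}$ explicitly. Using the standard $2\times 2$ inverse $T^{-1} = (\det T)^{-1}\left(\begin{smallmatrix} u & -s \\ -t & r \end{smallmatrix}\right)$ yields
\[
c' = \frac{cu-dt}{\det T},\quad d' = \frac{dr-cs}{\det T},\quad e' = \frac{eu-ft}{\det T},\quad f' = \frac{fr-es}{\det T}.
\]
Each of $\nu(c'),\nu(d'),\nu(e'),\nu(f')$ is shifted by the common constant $-\nu(\det T)$, which therefore cancels in the signed combination $\nu(e') - \nu(c') + \nu(d') - \nu(f')$, leaving precisely $\nu(eu-ft) - \nu(cu-dt) + \nu(cs-dr) - \nu(es-fr)$; taking absolute values reproduces~\eqref{formulaOfSharedEdges}. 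The only real obstacle is the sign bookkeeping in this last step, and the implicit claim that the right-hand side of~\eqref{formulaOfSharedEdges} is independent of the chosen representatives $M$ and $T$ of the two apartments then follows a posteriori from the geometric invariance of the left-hand side (alternatively, it can be checked directly: swapping the rows of $M$ flips the overall sign of the inner expression, and scaling the rows by $p$-powers contributes canceling pairs).
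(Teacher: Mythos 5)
Your proof is correct and follows essentially the same route as the paper: reduce to the case $\mathscr{A}''=I_a$ and invoke the edge count~\eqref{forInt} already established in the appendix. The only cosmetic difference is that you perform the reduction by an explicit change of basis and compute the entries of $MT^{-1}$, whereas the paper first notes that each of the four minors in~\eqref{formulaOfSharedEdges} shifts by $|T|_p$ under simultaneous right multiplication (so the signed sum is invariant) and then sets $\mathscr{A}''=I_a$; the underlying cancellation of $\nu(\det T)$ is the same in both arguments.
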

\begin{proof}
If we act on $\mathscr{A}'$ and $\mathscr{A}''$ by $T\in \mathrm{GL}_2(\mathbb{Q}_p)$ (that is the matrix representatives are multiplied by $T$), the formula stays the same:
$$
\bigg|     
     \left|\begin{array}{ccc}   
    c & d\\
    r & s\\
    \end{array} \right|_p+|T|_p-
     \left|\begin{array}{ccc}      
    c & d\\
    t & u\\
    \end{array} \right|_p-|T|_p+
     \left|\begin{array}{ccc}   
    e & f\\
    t & u\\
    \end{array} \right|_p+|T|_p-
     \left|\begin{array}{ccc}   
    e & f\\
    r & s\\
    \end{array} \right|_p-|T|_p    
    \bigg|.
$$
Hence we can by an action of a convenient $T$ transform $\mathscr{A}''$ to $I_a$, and so we can assume $\mathscr{A}''=I_a$.

Evaluating formula~\eqref{formulaOfSharedEdges} in this case we get
$$
\bigg|     
     \left|\begin{array}{ccc}   
    c & d\\
    1 & 0\\
    \end{array} \right|_p-
     \left|\begin{array}{ccc}      
    c & d\\
    0 & 1\\
    \end{array} \right|_p+
     \left|\begin{array}{ccc}   
    e & f\\
    0 & 1\\
    \end{array} \right|_p-
     \left|\begin{array}{ccc}   
    e & f\\
    1 & 0\\
    \end{array} \right|_p    
    \bigg|.
$$
That equals $|\nu(d)-\nu(c)+\nu(e)-\nu(f)|$, and so the statement holds according to the foregoing computation.
\end{proof}
\begin{rem}
Note that neither multiplying any row of the two matrices by scalar nor interjecting rows of any matrix changes the value of formula~\eqref{formulaOfSharedEdges}.
\end{rem}
\section{Conclusions}
We have described how to code the vertices of $\mathscr{B}(\mathrm{SL}_d(\mathbb{Q}_p))$ by certain matrix representatives. Then we have introduced so called distance formulas: we showed that using valuations of certain determinants, $+$, $-$ and $\mathrm{max}$ we can compose well-defined functions on pairs of vertices. An explicit formula of graph distance is a key example of the concept (some others are mentioned). At the first sight the distance formulas are not computation-friendly, however knowing the form or even the existence of an explicit formula is interesting. We have also studied generalization of distance formulas to three and more vertices. We have partially succeeded in generalizing formula of graph distance to certain inequality, (where equality holds when we restrict ourself to the cease of one apartment). We expects the equality holds in the general case as well.   
In Appendix we performed calculation which led to formula counting the number of edges common to two given apartments in $\mathscr{B}(\mathrm{SL}_2(\mathbb{Q}_p))$. We have used the distance formula (Theorem~\ref{distance}) to transform the problem to inequalities in comfortable way. Nevertheless, we could arrive at the inequalities without use of the distance formula.

\section{Acknowledgment}
Most of the results in the article are part of my master thesis. Hence, I would like to thanks Vít\v{e}zslav Kala, my supervisor during writing the thesis. I appreciates his valuable advices as well as his excellent work with students.

\bibliography{bibliography}

\begin{thebibliography}{Lan02}

\bibitem[BT72]{BT72}
T.~Bruhat and J.~Tits.
\newblock Groupes r{\'e}ductifs sur un corps local.
\newblock {\em Inst. Hautes Etudes Sci. Publ. Math.}, 41:5--251, 1972.

\bibitem[BT84]{BT84}
T.~Bruhat and J.~Tits.
\newblock Groupes r{\'e}ductifs sur un corps local, ii. sch{\'e}mas en groupes.
  existence d'une donn{\'e}e radicielle valu{\'e}e.
\newblock {\em Inst. Hautes Etudes Sci. Publ. Math.}, 60:5--184, 1984.

\bibitem[Die10]{Ha}
R.~Diestel.
\newblock {\em Graph Theory}.
\newblock Springer, 4 edition, 2010.

\bibitem[Gar97]{Gar}
P.~B. Garrett.
\newblock {\em Buildings and Classical Groups}.
\newblock Chapman \& Hall, 1997.

\bibitem[Gro92]{GS}
R.~Gromov, M.~Schoen.
\newblock Harmonic maps into singular spaces and p-adic superrigidity for
  lattices in groups of rank one.
\newblock {\em Inst. Hautes Itudes Sci. Publ. Math.}, 76:165–--246, 1992.

\bibitem[HK06]{ARD}
Gubareni~N. Hazewinkel, M. and V.~V. Kirichenko.
\newblock {\em Algebras, Rings and Modules}.
\newblock Springer Science \& Business Media, 2006.

\bibitem[IM65]{IM}
N.~Iwahori and H.~Matsumoto.
\newblock On some bruhat decomposition and the structure of the hecke rings of
  p-adic chevalley groups.
\newblock {\em Inst. Hautes Etudes Sci. Publ. Math.}, 25:5--48, 1965.

\bibitem[Lan02]{Lang}
S.~Lang.
\newblock {\em Algebra}.
\newblock Springer-Verlag New York, 3 edition, 2002.

\bibitem[Liz06]{Li}
J.~Lizhen.
\newblock Buildings and their applications in geometry and topology.
\newblock {\em Asian J. Math.}, 10:11--80, 2006.

\bibitem[MP94]{MoB}
A.~Moy and G.~Prasad.
\newblock Unrefined minimal k-types for p-adic groups.
\newblock {\em Inv. Math.}, 116:393--408, 1994.

\bibitem[Smi61]{Smith}
H~Smith.
\newblock On systems of linear indeterminate equations and congruences.
\newblock {\em Philosophical Transactions of the Royal Society of London},
  151:293--326, 1861.

\end{thebibliography}

\bibliographystyle{Alpha}
\end{document}